\numberwithin{equation}{section}
\newtheorem{theorem}{Theorem}[section]
\newtheorem{lemma}[theorem]{Lemma}
\newtheorem{remark}[theorem]{Remark}
\newcommand{\normmm}[1]{{\left\vert\kern-0.25ex\left\vert\kern-0.25ex\left\vert #1
		\right\vert\kern-0.25ex\right\vert\kern-0.25ex\right\vert}}
	\newdimen\cellthresh
	\newdimen\celldist
\begin{document}
\title{A Nonconforming Finite Element Method for Elliptic Interface Problems on Locally Anisotropic Mixed Meshes}
\author{
	Chenchen Geng$^{1}$ \quad
	Hua Wang$^{1}$\footnote{Correspondence author. E-mail addresses: wanghua@xtu.edu.cn (H. Wang).\\
		\textbf{Funding:} This research is supported by NSFC project 12101526 and Young Elite Scientists Sponsorship Program by CAST 2023QNRC001.}
	\quad
	Qichen Zhang$^2$\\
	{\small $^1$ School of Mathematics and Computational Science, Xiangtan University, Xiangtan 411105, China}\\
     \small	$^2$ School of Mathematical Sciences, Peking University, Beijing 100871, China \\
}

	\date{}
	\maketitle
	
\begin{abstract}
	We propose a new nonconforming \(P_1\) finite element method for elliptic interface problems. The method is constructed on a locally anisotropic mixed mesh, which is generated by fitting the interface through a simple connection of intersection points on an interface-unfitted background mesh, as introduced in \cite{Hu2021optimal}. We first establish interpolation error estimates on quadrilateral elements satisfying the regular decomposition property (RDP). Another key contribution is a novel consistency error analysis for nonconforming elements, which removes the quasi-regularity assumption commonly required in existing approaches. Numerical results confirm the theoretical convergence rates and demonstrate the robustness and accuracy of the proposed method.	
\end{abstract}

\textbf{Keywords}: elliptic interface problem; anisotropic mesh; nonconforming finite element; interpolation estimate; consistency error analysis.
	
\section{Introduction}
In this paper, we present a finite element method for solving the elliptic interface problem:
\begin{equation}\label{interface-problem}
	\begin{aligned}
		-\nabla \cdot (\beta \nabla u) &= f \quad \text{in} \ \Omega_{1} \cup \Omega_{2}, \\
		u &= 0 \quad \text{on} \ \partial \Omega, \\
		[\![u]\!] &= 0 \quad \text{on} \ \Gamma, \\
		\left[\!\!\left[ \beta \frac{\partial u}{\partial \bm{n}} \right]\!\!\right] &= 0 \quad \text{on} \ \Gamma,
	\end{aligned}
\end{equation}
where the discontinuous diffusion coefficient $\beta$ is defined as
\begin{equation}\label{beta-def}
	\beta = 
	\begin{cases} 
		\beta_1 & \text{in} \ \Omega_1, \\
		\beta_2 & \text{in} \ \Omega_2,
	\end{cases}
\end{equation}
with $\beta_1, \beta_2 > 0$ being positive constants.

Interface problems arise in diverse scientific and engineering fields, including composite material analysis \cite{Kafafy2005three}, fluid-structure interaction \cite{Gerstenberger2008an,Tezduyar2006space}, and multiphase flow simulations \cite{Chen1997fully}. These applications feature solutions with reduced regularity across material interfaces, posing significant challenges for numerical methods. The Finite Element Method (FEM) has emerged as a primary computational tool, categorized into fitted and unfitted approaches based on mesh-interface alignment.

Unfitted mesh methods are particularly effective for interface problems involving complex geometries due to their flexibility in managing irregular interface structures. These methods have undergone significant advancements aimed at improving accuracy, computational efficiency, and adaptability to complex interfaces. Prominent examples include the Immersed Finite Element Method (IFEM) and the Extended Finite Element Method (XFEM), extensively studied and validated in numerous works (see, e.g., \cite{Li1998, Li2003, An2014, Chou2012immersed, Lin2015, Ji2023,Wang2019a,Zhang2019strongly}). Additionally, other promising approaches like the Generalized Finite Element Method (GFEM) have also been proposed, see \cite{Fries2010extended} for an overview. IFEM typically modifies finite element basis functions to explicitly satisfy interface conditions, whereas XFEM introduces penalization terms into the variational formulation to weakly enforce these conditions, known as interior penalty or Nitsche's methods (see \cite{Hansbo2002unfitted, Li2003, burman2015cutfem, Hansbo2014cut, Cao2022extended, Fries2010extended}). For example, Chen et al. \cite{Chen2023an} combined XFEM with a novel mesh generation strategy, effectively merging small interface elements with neighboring elements.

Another widely investigated strategy involves refining the unfitted mesh near interfaces to construct locally fitted or anisotropic meshes. Previous studies have demonstrated significant advancements using this approach (see \cite{Chen2009the, Xu2016optimal, chen2017interface, Hu2021optimal}). Chen et al. \cite{Chen2009the} generated intermediate fitted meshes by subdividing interface tetrahedra into smaller tetrahedra through the latest vertex bisection algorithm, preserving mesh quality throughout adaptive refinement. Xu et al. \cite{Xu2016optimal} proposed linear finite element schemes for diffusion and Stokes equations on interface-fitted grids satisfying the maximal angle condition. Similarly, Chen et al. \cite{chen2017interface} developed methods for semi-structured, interface-fitted mesh generation in two and three dimensions, leveraging virtual element methods to solve elliptic interface problems.

However, refined elements adjacent to interfaces frequently violate minimal angle conditions (shape regularity), complicating error analysis and numerical stability. Despite these challenges, this refinement approach remains prevalent due to its adaptability in handling complex interface geometries.

Most unfitted methods encounter significant difficulties when employing nonconforming elements. In Nitsche-type XFEM approaches, the weak continuity across cut edges is compromised, requiring penalty terms to stabilize consistency errors (see \cite{Wang2019a}). For IFEM, despite preserving weak continuity, the inherent solution singularities at interfaces cause consistency errors to degrade by half-order accuracy compared to interpolation errors (see \cite{Ji2023analysis}).

This work introduces a novel nonconforming finite element method on the local anisotropic hybrid meshes developed by Hu and Wang \cite{Hu2021optimal}. Our approach treats interface elements as macro-elements and constructs piecewise linear nonconforming finite elements that exactly recover the Park-Shen element \cite{Park2003P1} when restricted to quadrilateral element. To address the limitations of standard integral mean operators for interpolation estimates on anisotropic elements—where accuracy deteriorates due to shape dependency as shown in \cite{Mao2005convergence}—we introduce vertex-based averaging operators. We rigorously prove that these novel operators deliver optimal-order interpolation error estimates, independent of element anisotropy.

Furthermore, for consistency error estimation, we eliminate conventional shape regularity requirements. Our analysis accommodates meshes containing polygonal elements, provided each element can be subdivided into triangular sub-elements satisfying the maximum angle condition. To our knowledge, prior consistency error analyses were limited to tensor-product grids (e.g., right triangles and rectangles) \cite{Apel2001cr,Mao2005convergence}, with no existing results for such general anisotropic meshes.

The core of our approach lies in a straightforward yet effective directional decomposition: we split the consistency error within each element into separate x-directional and y-directional components, develop tailored estimation techniques for each directional term, then systematically combine these directional estimates. This decomposition strategy enables precise control of optimal-order consistency errors, naturally accommodating both solution singularities at interfaces and local mesh anisotropy.

The remainder of this paper is organized as follows. Section 2 introduces essential notation and preliminary concepts. In Section 3, the core results are presented. Specifically, Section 3.1 constructs the finite element spaces defined on interface elements. Subsequently, Section 3.2 formulates the weak formulations, followed by Section 3.3, which conducts the interpolation error analysis. Section 3.4 addresses the consistency error, and Section 3.5 presents the finite element error analysis. Numerical experiments validating our theoretical results are detailed in Section 4. Finally, conclusions and future research directions are discussed in Section 5.
\section{Notation}
	For integer $r\geq 0$, define the piecewise $H^{r}$ Sobolev space
	\begin{equation*}
		H^{r}(\Omega_{1}\cup\Omega_{2})=\{v\in L^2(\Omega);v|_{\Omega_{i}}\in H^{r}(\Omega_{i}),i=1,2\},
	\end{equation*}
	equipped with the norm and semi-norm
		\begin{eqnarray*}\label{norm}
			\begin{aligned}
				\|v\|_{H^{r}(\Omega_{1}\cup\Omega_{2})}&=(\|v\|_{H^{r}(\Omega_1)}^{2}
				+\|v\|_{H^{r}(\Omega_2)}^{2})^{1/2},\\
				|v|_{H^{r}(\Omega_{1}\cup\Omega_{2})}&=(|v|_{H^{r}(\Omega_1)}^{2}
				+|v|_{H^{r}(\Omega_2)}^{2})^{1/2}.
			\end{aligned}
		\end{eqnarray*}
	Furthermore, let $\tilde{H}^{r}(\Omega_1\cup\Omega_2)=H^{1}_{0}(\Omega)\cap H^{r}(\Omega_{1}\cup\Omega_{2}).$
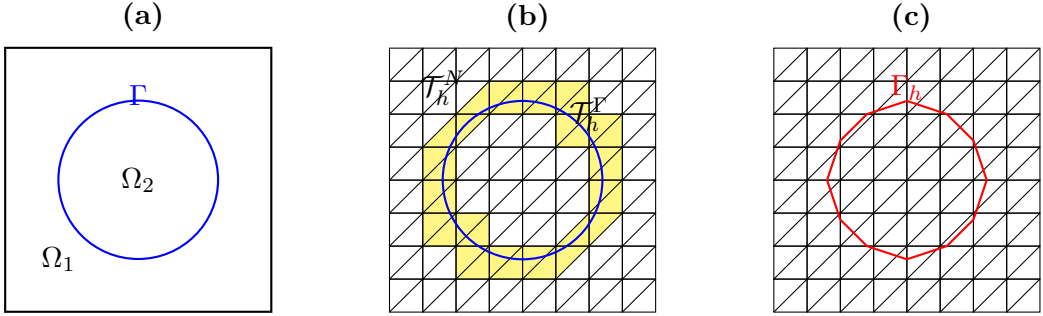
\begin{figure}[ht]
	\centering
	\begin{minipage}[b]{0.31\textwidth}
		\centering
		\textbf{(a)}\\[1ex]
		\begin{tikzpicture}[scale=3.5]
			\draw[black, thick] (0,0) rectangle (1,1);
			\draw[blue, thick] (0.5,0.5) circle[radius=0.3];
			\node at (0.2,0.2) {$\Omega_{1}$};
			\node at (0.5,0.5) {$\Omega_{2}$};
			\node[blue] at (0.5,0.82) {$\Gamma$};
		\end{tikzpicture}
		\label{subfig:domain}
	\end{minipage}
	\hfill
	\begin{minipage}[b]{0.31\textwidth}
		\centering
		\textbf{(b)}\\[1ex]
		\begin{tikzpicture}[scale=3.5]
			\def\n{8}
			\pgfmathsetmacro{\h}{1/\n}
			\foreach \i in {0,...,7} {
				\foreach \j in {0,...,7} {
					\pgfmathsetmacro{\x}{\i/\n}
					\pgfmathsetmacro{\y}{\j/\n}
					\coordinate (A) at (\x,\y);
					\coordinate (B) at (\x+\h,\y);
					\coordinate (C) at (\x+\h,\y+\h);
					\coordinate (D) at (\x,\y+\h);
					\pgfmathsetmacro{\phiA}{(\x-0.5)^2+(\y-0.5)^2-0.09}
					\pgfmathsetmacro{\phiB}{(\x+\h-0.5)^2+(\y-0.5)^2-0.09}
					\pgfmathsetmacro{\phiC}{(\x+\h-0.5)^2+(\y+\h-0.5)^2-0.09}
					\pgfmathsetmacro{\phiD}{(\x-0.5)^2+(\y+\h-0.5)^2-0.09}
					\pgfmathsetmacro{\minABC}{min(min(\phiA,\phiB),\phiC)}
					\pgfmathsetmacro{\maxABC}{max(max(\phiA,\phiB),\phiC)}
					\ifdim\minABC pt<0pt
					\ifdim\maxABC pt>0pt
					\fill[yellow!60] (A) -- (B) -- (C) -- cycle;
					\fi
					\fi
					\pgfmathsetmacro{\minACD}{min(min(\phiA,\phiC),\phiD)}
					\pgfmathsetmacro{\maxACD}{max(max(\phiA,\phiC),\phiD)}
					\ifdim\minACD pt<0pt
					\ifdim\maxACD pt>0pt
					\fill[yellow!60] (A) -- (C) -- (D) -- cycle;
					\fi
					\fi
				}
			}
			\foreach \i in {0,...,7} {
				\foreach \j in {0,...,7} {
					\pgfmathsetmacro{\x}{\i/\n}
					\pgfmathsetmacro{\y}{\j/\n}
					\draw[black, thin] (\x,\y) rectangle ++(\h,\h);
					\draw[black, thin] (\x,\y) -- ++(\h,\h);
				}
			}
			\draw[blue, thick] (0.5,0.5) circle[radius=0.3];
			\node[inner sep=0.8pt] at (0.2,0.85) {$\mathcal{T}_h^{N}$};
			\node[inner sep=0.8pt] at (0.75,0.75) {$\mathcal{T}_h^{\Gamma}$};
		\end{tikzpicture}
		\label{subfig:unfitted}
	\end{minipage}
	\hfill
	\begin{minipage}[b]{0.31\textwidth}
		\centering
		\textbf{(c)}\\[1ex]
		\begin{tikzpicture}[scale=3.5]
			\def\n{8}
			\pgfmathsetmacro{\h}{1/\n}
			\foreach \i in {0,...,7} {
				\foreach \j in {0,...,7} {
					\pgfmathsetmacro{\x}{\i/\n}
					\pgfmathsetmacro{\y}{\j/\n}
					\draw[black, thin] (\x,\y) rectangle ++(\h,\h);
					\draw[black, thin] (\x,\y) -- ++(\h,\h);
				}
			}
			\draw[red, thick]
			(0.8,0.5)  -- (0.75,0.65) -- (0.65,0.75) -- (0.5,0.8)  --
			(0.35,0.75) -- (0.25,0.65) -- (0.2,0.5)  -- (0.25,0.35) --
			(0.35,0.25) -- (0.5,0.2)   -- (0.65,0.25) -- (0.75,0.35) -- cycle;
			\node[red] at (0.5,0.85) {$\Gamma_h$};
		\end{tikzpicture}
		\label{subfig:fitted}
	\end{minipage}
	
	\caption{Geometric interface and mesh interaction:
		(a) the computational domain for the interface problem;
		(b) unfitted mesh~$\mathcal{T}_h$;
		(c) local anisotropic hybrid mesh~$\tilde{\mathcal{T}}_h$.}
	\label{fig:mesh}
\end{figure}	

We initiate the process by generating an interface-unfitted mesh $\mathcal{T}_h$, which serves as the background mesh (see Figure \ref{fig:mesh}(b)). By sequentially connecting the intersection points of the interface $\Gamma$ (blue line) and the mesh edges, a polygonal approximation $\Gamma_h$ (red line) of the interface $\Gamma$ is constructed. The resulting mesh, denoted by $\widetilde{\mathcal{T}}_h$ (see Figure \ref{fig:mesh}(c)), is an interface-fitted mesh that contains anisotropic triangles and quadrilaterals in the vicinity of the interface. The domain $\Omega$ is thereby partitioned into two polygonal subdomains $\Omega_{1,h}$ and $\Omega_{2,h}$ by $\Gamma_h$, which serve as approximations to $\Omega_1$ and $\Omega_2$, respectively.

Define the following mesh subsets:
\begin{align}
	&\mathcal{T}_h^{\Gamma} := \{K \in \mathcal{T}_h \,;\, K \cap \Gamma \neq \emptyset\},\\
	&\mathcal{T}_h^N := \mathcal{T}_h \setminus \mathcal{T}_h^{\Gamma}. 
\end{align}
Elements in $\mathcal{T}_h^{\Gamma}$ are referred to as interface elements. The mesh $\widetilde{\mathcal{T}}_h$ can be regarded as a refinement of $\mathcal{T}_h$. Let $\widetilde{\mathcal{E}}_h$ denote the set of all edges in $\widetilde{\mathcal{T}}_h$. Define $\widetilde{\mathcal{T}}_{h,i}$ as the subset of elements in $\widetilde{\mathcal{T}}_h$ that lie within $\Omega_{i,h}$. Let $\widetilde{\mathcal{E}}_h^{\Gamma}$ denote the collection of edges that coincide with $\Gamma_h$, and $\widetilde{\mathcal{E}}_h^N := \widetilde{\mathcal{E}}_h \setminus \widetilde{\mathcal{E}}_{h}^{\Gamma}$. Additionally, we denote the set of boundary edges by $\widetilde{\mathcal{E}}_h^0$.

\section{The nonconforming finite element methods}
\subsection{The $P_1$ nonconforming element space}
The $P_1$ nonconforming finite element space on the locally anisotropic hybrid mesh $\widetilde{\mathcal{T}}_h$ is defined by
\begin{equation}
	V_h = \left\{ v \in L^2(\Omega) \;\middle|\; v|_K \in P_1(K)\ \forall K \in \widetilde{\mathcal{T}}_h,\ \int_e [v]\,\mathrm{d}s = 0\ \forall e \in \widetilde{\mathcal{E}}_h \right\}.
\end{equation}
While this definition appears straightforward, further clarification is required to ensure that it is well-posed. Let $V_h(K)$ denote the restriction of $V_h$ to an element $K$. If $K \in \mathcal{T}_h^N$, then $V_h(K)$ corresponds to the standard Crouzeix–Raviart element. The nontrivial case is when $K \in \mathcal{T}_h^{\Gamma}$.

Each interface element $K$ is typically partitioned by the discrete interface $\Gamma_h$ into either two triangles or a triangle and a quadrilateral. Under reasonable assumptions, we restrict our attention to these two configurations. The following discussion concerns the construction and properties of basis functions on interface macro-elements, along with the associated interpolation error estimates.

\begin{figure}[ht]
	\centering
	\begin{tikzpicture}[scale=1]
		\coordinate (A1) at (0,0);
		\coordinate (A2) at (2.3,0);
		\coordinate (A4) at (1.2,2.3);
		\draw[thick] (A1) -- (A2) -- (A4) -- cycle;
		\node at (1,-0.3) {$K$};
		
		\coordinate (A3) at ($(A2)!0.3!(A4)$);
		\coordinate (A5) at ($(A1)!0.5!(A4)$);
		\draw[thin] (A3) -- (A5);
		
		\node[below left]  at (A1) {$A_1$};
		\node[below right] at (A2) {$A_2$};
		\node[above]       at (A4) {$A_4$};
		\node[right]       at (A3) {$A_3$};
		\node[left]        at (A5) {$A_5$};
		
		\node at (1.3,1.2) {$T$};
		\node at (1.3,0.4) {$Q$};
		
		\draw[->, thick](5.2,1)  -- node[above] {$F_K$} (3,1);
		
		\begin{scope}[xshift=6cm]
			\coordinate (Ah1) at (0,0);
			\coordinate (Ah2) at (2.3,0);
			\coordinate (Ah4) at (0,2.3);
			\draw[thick] (Ah1) -- (Ah2) -- (Ah4) -- cycle;
			\node at (1,-0.3) {$\hat K$};
			
			\coordinate (Ah3) at ($(Ah2)!0.3!(Ah4)$);
			\coordinate (Ah5) at ($(Ah1)!0.5!(Ah4)$);
			\draw[thin] (Ah3) -- (Ah5);
			
			\node[below left]  at (Ah1) {$\hat A_1$};
			\node[below right] at (Ah2) {$\hat A_2$};
			\node[above]       at (Ah4) {$\hat A_4$};
			\node[right]       at (Ah3) {$\hat A_3$};
			\node[left]        at (Ah5) {$\hat A_5$};
			
			\node at (0.5,1.4) {$\hat T$};
			\node at (0.6,0.4) {$\hat Q$};
		\end{scope}
	\end{tikzpicture}
	\caption{The interface macro element}
	\label{fig:macro-element}
\end{figure}
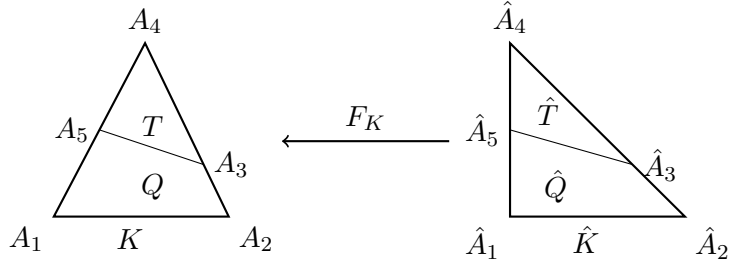
Consider a general interface macro-element $K$ as illustrated in Figure~\ref{fig:macro-element}. Define the cut ratio parameters by
\begin{equation*}
	t = \frac{|A_1A_5|}{|A_1A_4|}, \quad s =  \frac{|A_2A_3|}{|A_2A_4|}.
\end{equation*}
Clearly, $0 \leq s, t \leq 1$. Without loss of generality, we assume $t \geq s$; otherwise, we apply a reflection transformation to satisfy this condition.

\textbf{Case I.} The interface passes through a vertex of the macro-element, i.e., $t = 0$. In this case, the element is divided into two triangles. It can be easily shown that both sub-triangles satisfy the maximum angle condition; see \cite{Hu2021optimal}.

\textbf{Case II.} The interface intersects the interior of two edges of the macro-element, i.e., $0 < s \leq t < 1$. In this case, an affine mapping $F$ is used to map the physical macro-element $K$ to a reference element $\hat{K}$:
\begin{equation}
	\bm{x} = F(\hat{\bm{x}}) = \bm{B} \hat{\bm{x}} + \bm{b}.
\end{equation}
The coordinates of the points $\hat{A}_1, \hat{A}_2, \dots, \hat{A}_5$ are given by
\begin{equation*}
	(0, 0), \quad (1, 0), \quad (1 - s, s), \quad (0, 1), \quad (0, t),
\end{equation*}
where $0 < s \leq t < 1$. Note that when $t = 0$, $\hat{A}_5$ coincides with $\hat{A}_1$, which we exclude to avoid degeneracy. Since $\hat{A}_3$ and $\hat{A}_5$ cannot simultaneously coincide with the vertices of the triangle $\hat{A}_1\hat{A}_2\hat{A}_4$, we restrict $t$ to be strictly positive.
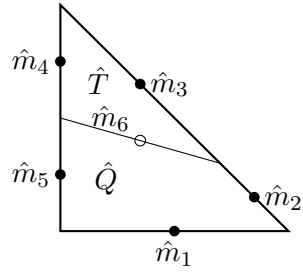
\begin{figure}[ht]
	\centering
	\begin{tikzpicture}[scale=1]
		\coordinate (Ah1) at (0,0);
		\coordinate (Ah2) at (3,0);
		\coordinate (Ah4) at (0,3);
		\draw[thick] (Ah1) -- (Ah2) -- (Ah4) -- cycle;
		
		\coordinate (Ah3) at ($(Ah2)!0.3!(Ah4)$);
		\coordinate (Ah5) at ($(Ah1)!0.5!(Ah4)$);
		\draw[thin] (Ah3) -- (Ah5);
		
		\coordinate (Mh1) at ($(Ah1)!0.5!(Ah2)$);
		\coordinate (Mh2) at ($(Ah2)!0.5!(Ah3)$);
		\coordinate (Mh3) at ($(Ah3)!0.5!(Ah4)$);
		\coordinate (Mh4) at ($(Ah4)!0.5!(Ah5)$);
		\coordinate (Mh5) at ($(Ah5)!0.5!(Ah1)$);
		\coordinate (Mh6) at ($(Ah3)!0.5!(Ah5)$);
		
		\foreach \M in {Mh1,Mh2,Mh3,Mh4,Mh5} {\fill (\M) circle (2pt);}
		\draw (Mh6) circle (2pt);
		
		\node[below]      at (Mh1) {$\hat m_1$};
		\node[right]      at (Mh2) {$\hat m_2$};
		\node[right]      at (Mh3) {$\hat m_3$};
		\node[left]       at (Mh4) {$\hat m_4$};
		\node[left]       at (Mh5) {$\hat m_5$};
		\node[above left] at (Mh6) {$\hat m_6$};
		
		\node at (0.5,2.0) {$\hat T$};
		\node at (0.6,0.7) {$\hat Q$};
	\end{tikzpicture}
	\caption{Degrees of freedom}
	\label{degree}
\end{figure}

We denote the edges $\hat{A}_1\hat{A}_2$, $\hat{A}_2\hat{A}_3$, $\hat{A}_3\hat{A}_4$, $\hat{A}_4\hat{A}_5$, $\hat{A}_5\hat{A}_1$, and $\hat{A}_3\hat{A}_5$ by $\hat{e}_1$ to $\hat{e}_6$, respectively, with $\hat{m}_i$ being the midpoint of $\hat{e}_i$ (see Figure \ref{degree}). The macro-element $\hat{K}$ consists of triangular ($\hat{T}$) and quadrilateral ($\hat{Q}$) subelements. 

The basis functions defined on the macro-element $\hat{K}$ resemble Hsieh–Clough–Tocher type element and are constructed according to the finite element triple $(\hat{K}, \mathcal{P}_{\hat{K}}, \mathcal{N})$, where
\begin{equation}
	\left\{
	\begin{aligned}
		&\mathcal{P}_{\hat{K}}=\{v\in L^2(\hat{K}); v|_{\hat{T}}\in P_1(\hat{T}),~v|_{\hat{Q}}\in P_1(\hat{Q}), [\![v(\hat{m}_6)]\!] =0\},	\\
		&\mathcal{N} = \{\mathcal{N}_1,\mathcal{N}_2,\cdots,\mathcal{N}_5\}~ \text{where} ~\mathcal{N}_i(v) = v(\hat{m}_i),~ 1\leq i\leq 5.
	\end{aligned}
	\right.
\end{equation}
This definition implies that there are no degrees of freedom on edges belonging to $\widetilde{\mathcal{E}}_h^{\Gamma}$. The unisolvence can be directly deduced via straightforward calculations. Furthermore, the explicit basis functions $\hat{\phi}_i \in \mathcal{P}_{\hat{K}}$ corresponding to $\mathcal{N}_i$ are given by:
\begin{align}
	\hat{\phi}_1 &= \begin{cases} 
		\dfrac{(s - t)\hat{x} + (s - 2)\hat{y} + t - st/2}{|\hat{Q}|}, & (\hat{x},\hat{y}) \in \hat{Q} \\[10pt]
		-\dfrac{(s - t)\hat{x} + (s - 1)\hat{y} + (1 + t - s - st)/2}{|\hat{T}|}, & (\hat{x},\hat{y}) \in \hat{T}
	\end{cases} \\
	\hat{\phi}_2 &= \begin{cases} 
		\dfrac{t\hat{x} + \hat{y} - t/2}{|\hat{Q}|}, & (\hat{x},\hat{y}) \in \hat{Q} \\[10pt]
		\dfrac{(s - t)\hat{x} + (s - 1)\hat{y} + (1 + t - s - st)/2}{|\hat{T}|}, & (\hat{x},\hat{y}) \in \hat{T}
	\end{cases} \\
	\hat{\phi}_3 &= \begin{cases} 
		0, & (\hat{x},\hat{y}) \in \hat{Q} \\[10pt]
		-\dfrac{(1 - s)\hat{x} + (1 - s)\hat{y} + (1 + t)(1 - s)/2}{|\hat{T}|}, & (\hat{x},\hat{y}) \in \hat{T}
	\end{cases} \\
	\hat{\phi}_4 &= \begin{cases} 
		0, & (\hat{x},\hat{y}) \in \hat{Q} \\[10pt]
		-\dfrac{(t - 1)\hat{x} + (1 - t)(1 - s)/2}{|\hat{T}|}, & (\hat{x},\hat{y}) \in \hat{T}
	\end{cases} \\
	\hat{\phi}_5 &= \begin{cases} 
		\dfrac{-s\hat{x} + (1 - s)\hat{y} + s/2}{|\hat{Q}|}, & (\hat{x},\hat{y}) \in \hat{Q} \\[10pt]
		-\dfrac{(s - t)\hat{x} + (s - 1)\hat{y} + (1 + t)(1 - s)/2}{|\hat{T}|}, & (\hat{x},\hat{y}) \in \hat{T}
	\end{cases}
\end{align}
where $|\hat{Q}| = \frac{1}{2}(t + s - st)$ and $|\hat{T}| = \frac{1}{2}(1 - t)(1 - s)$ denote the areas of the subelements.

\begin{lemma}\label{lem:basis-estimate}
	The basis functions satisfy the uniform estimate:
	\begin{equation}
		|\hat{\phi}_i|_{H^1(\hat{Q})} \lesssim |\hat{Q}|^{-1/2}, \quad 1 \leq i \leq 5.
	\end{equation}
\end{lemma}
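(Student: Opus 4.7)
The plan is to reduce the estimate to an elementary computation on the explicit formulas for $\hat{\phi}_i|_{\hat{Q}}$ provided just above the lemma. Since each $\hat{\phi}_i$ restricted to the quadrilateral subelement $\hat{Q}$ is an affine function, its gradient is a constant vector on $\hat{Q}$, so
\begin{equation*}
	|\hat{\phi}_i|_{H^1(\hat{Q})}^2 = |\nabla \hat{\phi}_i|^2 \, |\hat{Q}|.
\end{equation*}
Hence the target estimate $|\hat{\phi}_i|_{H^1(\hat{Q})} \lesssim |\hat{Q}|^{-1/2}$ is equivalent to showing $|\nabla \hat{\phi}_i| \lesssim |\hat{Q}|^{-1}$ uniformly in the cut parameters $s,t$.

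The approach I would take is to read off the gradient on $\hat{Q}$ directly from the explicit piecewise formulas. For instance, $\nabla \hat{\phi}_1|_{\hat{Q}} = |\hat{Q}|^{-1}(s-t,\, s-2)$, $\nabla \hat{\phi}_2|_{\hat{Q}} = |\hat{Q}|^{-1}(t,\,1)$, $\nabla \hat{\phi}_5|_{\hat{Q}} = |\hat{Q}|^{-1}(-s,\,1-s)$, while $\hat{\phi}_3$ and $\hat{\phi}_4$ vanish identically on $\hat{Q}$ and are trivial. In each nonzero case the two components of the gradient, when scaled by $|\hat{Q}|$, are polynomials of degree at most one in $s,t$ whose absolute values are uniformly bounded on the parameter range $0 \leq s \leq t \leq 1$. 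Thus
\begin{equation*}
	|\nabla \hat{\phi}_i|_{\hat{Q}}| \leq \frac{C}{|\hat{Q}|},
\end{equation*}
with $C$ an absolute constant independent of $s$ and $t$.

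Combining this bound with the identity for the $H^1$ seminorm of an affine function yields $|\hat{\phi}_i|_{H^1(\hat{Q})} \leq C \, |\hat{Q}|^{-1/2}$, which is precisely the claim. The proof requires no real obstacle — the only subtle point is confirming that the numerators of $\nabla \hat{\phi}_i$ on $\hat{Q}$ remain bounded uniformly as the interface degenerates (i.e.\ as $s,t \to 0$ or $s,t \to 1$), and this is immediate from the linearity of these expressions in $s,t$ together with the fact that all the dangerous small-denominator behavior has already been isolated into the explicit factor $1/|\hat{Q}|$. I would emphasize that no similar bound holds for $\hat{\phi}_i|_{\hat{T}}$ scaled by $|\hat{T}|^{-1/2}$, which is precisely why the anisotropic analysis on $\hat{T}$ has to be handled by the more delicate techniques developed in the next subsections.
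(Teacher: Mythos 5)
Your proof is correct and is precisely the ``direct computation of the explicit expressions'' that the paper's one-line proof invokes: each $\hat{\phi}_i|_{\hat{Q}}$ is affine, so $|\hat{\phi}_i|_{H^1(\hat{Q})}=|\nabla\hat{\phi}_i|\,|\hat{Q}|^{1/2}$, and the numerators $(s-t,\,s-2)$, $(t,\,1)$, $(-s,\,1-s)$ are uniformly bounded for $0\le s\le t\le 1$. One minor inaccuracy in your closing aside: the identical argument \emph{does} give $|\hat{\phi}_i|_{H^1(\hat{T})}\lesssim|\hat{T}|^{-1/2}$ on the triangular piece as well (the numerators there are likewise bounded polynomials in $s,t$), but this remark is immaterial to the lemma itself.
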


\begin{proof}
	The bound follows from direct computation of the explicit expressions.
\end{proof}

Accordingly, the finite element space on the reference macro-element is defined as
\[
\hat{V}_h(\hat{K}) = \text{span}\left\{ \hat{\phi}_1, \hat{\phi}_2, \ldots, \hat{\phi}_5 \right\},
\]
and the corresponding physical-element space is given by
\[
V_h(K) = \left\{ v_h ~\middle|~ v_h = \hat{v}_h \circ F^{-1}_K,\; \hat{v}_h \in \hat{V}_h(\hat{K}) \right\}.
\]

\subsection{Weak formulation}
The continuous weak formulation of the elliptic interface problem is:
\begin{equation}\label{cweak}
	\begin{cases}
		\text{Find } u \in V := H^1_0(\Omega) \text{ such that} \\
		a(u,v) = F(v) \quad \forall v \in V,
	\end{cases}
\end{equation}
where $a(u,v) = (\beta\nabla u, \nabla v)$ and $F(v) = (f,v)$. The discrete variational formulation is given by:
\begin{equation}\label{dweak}
	\begin{cases}
		\text{Find } u_h \in \mathring{V}_h := \left\{ v \in V_h \ \middle|\ \int_e v  ds = 0\ \forall e \in \widetilde{\mathcal{E}}_h^{0} \right\} \text{ such that}\\
		a_h(u_h, v_h) = F(v_h) \quad \forall v_h \in \mathring{V}_h,
	\end{cases}
\end{equation}
where the discrete bilinear form is defined as $a_h(u_h, v_h) = \sum_{i=1}^{2} \sum_{K \in \widetilde{\mathcal{T}}_{h,i}} (\beta_i \nabla u_h, \nabla v_h)_K.$
This formulation corresponds to replacing the piecewise constant coefficient $\beta$ by its discrete approximation $\beta_h$, defined as
\[
\beta_h|_{K} =
\begin{cases}
	\beta_1 & K \in \widetilde{\mathcal{T}}_{h,1}, \\
	\beta_2 & K \in \widetilde{\mathcal{T}}_{h,2},
\end{cases}
\]
allowing the bilinear form to be equivalently expressed as
\[
a_h(u_h, v_h) = \sum_{K \in \widetilde{\mathcal{T}}_h} (\beta_h \nabla u_h, \nabla v_h)_K.
\]
\subsection{Interpolation error analysis}
Recall that $\widetilde{\mathcal{E}}_h^{\Gamma}$ denote the collection of all the edges align with $\Gamma_h$ and $\widetilde{\mathcal{E}}_h^N = \widetilde{\mathcal{E}}_h\setminus \widetilde{\mathcal{E}}_{h}^{\Gamma}$. Define $\pi_h:\tilde{H}^2(\Omega_{1}\cup\Omega_{2})\rightarrow \mathring{V}_h$ by
\begin{equation}
  \pi_h v(m_i) = (v(A_i)+v(A_{i+1}))/2\quad \forall e\in \mathcal{E}_h^N,
\end{equation}
where $A_i$ and $A_{i+1}$ is the two endpoints of $e$ and $m_i$ is the middle point. Denote $\pi_{K}$ the restriction of $\pi_{h}$ to element $K$.
\begin{remark} We note that for $K\in \widetilde{\mathcal{T}}_h^N$, $\pi_K v$ coincides with the standard nodal interpolation of linear conforming finite element. 
\end{remark}
We only need to analyze the interpolation error on interface macro element. For $K\in \mathcal{T}_h^{\Gamma}$, by a standard scaling argument, one has
	\begin{align}
		|v-\pi_{K}v|_{H^1(K)}
		&\leq C|\text{det}(B)|^{1/2}\|B^{-1}\||\hat{v}-\hat{\pi}_{\hat{K}}\hat{v}|_{H^1(\hat{K})}\\
		&\leq C|\hat{v}-\hat{\pi}_{\hat{K}}\hat{v}|_{H^1(\hat{K})}.
	\end{align}

For an arbitrary triangular element $T$, $\pi_{T}$ coincides with the standard nodal interpolation operator, i.e., $\pi_{T}v(A_i) = v(A_i)$. Since the triangular element $T$ satisfies the maximum angle condition, the following lemma holds, which is fundamentally attributed to Babuška \cite{Babuska1976on}:

\begin{lemma}
	Let $T$ be a triangular element satisfying the maximum angle condition. For all $v \in H^2(T)$, we have
	\begin{equation}
		|v - \pi_{T} v|_{H^1(T)} \lesssim h_T |v|_{H^2(T)},
	\end{equation}
	where $h_T$ denotes the diameter of element $T$.
\end{lemma}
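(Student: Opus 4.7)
The plan is to follow the classical Babuška--Aziz strategy, whose defining feature is a directional decomposition of the error that avoids any dependence on the minimum angle of $T$. I would begin by labeling the vertices $A_1,A_2,A_3$ so that the maximum angle lies at $A_3$, and introducing the oblique unit basis $\mathbf{t}_1=(A_2-A_1)/|A_1A_2|$, $\mathbf{t}_2=(A_3-A_1)/|A_1A_3|$. Since the maximum angle is bounded away from $\pi$ by hypothesis, the angle at $A_1$ is bounded away from both $0$ and $\pi$, so the sine of the angle between $\mathbf{t}_1$ and $\mathbf{t}_2$ is uniformly bounded below. Consequently, the Euclidean gradient and the pair of directional derivatives $(\partial_{\mathbf{t}_1}w,\partial_{\mathbf{t}_2}w)$ are equivalent, with constants depending only on the maximum-angle bound.

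It then suffices to estimate $\|\partial_{\mathbf{t}_j}(v-\pi_T v)\|_{L^2(T)}$ separately for $j=1,2$. For $j=1$, I would foliate $T$ by segments parallel to the edge $A_1A_2$. On each such chord the endpoints lie on the edges $A_1A_3$ and $A_2A_3$, where $\pi_T v$ is affine and interpolates $v$ at the vertices; hence $\partial_{\mathbf{t}_1}(v-\pi_Tv)$ has vanishing integral along every chord after pairing with the boundary contributions. A one-dimensional Poincaré-type argument, followed by Fubini in the oblique $(\mathbf{t}_1,\mathbf{t}_2)$ coordinates, yields
\begin{equation*}
\|\partial_{\mathbf{t}_1}(v-\pi_T v)\|_{L^2(T)}\lesssim |A_1A_2|\,|v|_{H^2(T)}.
\end{equation*}
A symmetric argument applied to chords parallel to $A_1A_3$ gives the analogous bound for $\partial_{\mathbf{t}_2}$. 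Summing the two contributions, converting the directional derivatives back to Cartesian ones via the bounded change-of-basis, and using $|A_1A_2|,|A_1A_3|\le h_T$, produces $|v-\pi_T v|_{H^1(T)}\lesssim h_T |v|_{H^2(T)}$ with a constant depending only on the maximum-angle bound.

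The main obstacle is making the 1D chord argument rigorous for merely $H^2$ data: traces of $\partial_{\mathbf{t}_j}v$ on interior chords must be interpreted pointwise, which requires either a density argument through $C^\infty(\overline{T})$ or a careful use of the fact that $v\in H^2(T)$ implies $\partial_{\mathbf{t}_j}v\in H^1(T)$ and therefore possesses well-defined 1D traces, combined with a Poincaré constant on each chord that scales linearly in chord length. Since the chord lengths are uniformly bounded by $h_T$ and the angle between the foliation direction and the transverse direction is controlled by the maximum-angle hypothesis, all constants remain independent of the aspect ratio of $T$. As this result is classical, in the final writeup I would simply invoke \cite{Babuska1976on} rather than expand the full technical argument.
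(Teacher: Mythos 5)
The paper itself offers no written proof of this lemma; it is stated and attributed directly to Babu\v{s}ka's reference, so your sketch is being compared against the classical argument you are reconstructing. Your overall strategy (oblique directional decomposition along two edge directions, edge-wise vanishing mean, 1D Poincar\'e plus Fubini) is indeed the right one and is essentially what the cited literature does.

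However, there is a concrete error in the norm-equivalence step. You label the vertices so that the maximum angle sits at $A_3$ and then assert that ``the angle at $A_1$ is bounded away from both $0$ and $\pi$.'' The maximum angle condition gives no lower bound on $\theta_1$: the triangle with angles $(\varepsilon,\tfrac{\pi}{2}-\varepsilon,\tfrac{\pi}{2})$ satisfies the hypothesis for every $\varepsilon>0$, yet with your labeling $A_1$ may carry the angle $\varepsilon$. Permitting arbitrarily small angles is precisely the point of the maximum angle condition, so this is not a corner case. The change of basis from $(\partial_{\mathbf{t}_1}w,\partial_{\mathbf{t}_2}w)$ back to $\nabla w$ carries a factor $1/\sin\theta_1$, which blows up, and the final constant would then depend on the aspect ratio --- defeating the purpose of the lemma. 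The repair is standard: anchor the oblique frame at the maximum-angle vertex, taking $\mathbf{t}_1=(A_1-A_3)/|A_3A_1|$ and $\mathbf{t}_2=(A_2-A_3)/|A_3A_2|$; the angle between these is $\theta_3\in[\pi/3,\pi-\gamma]$, so $\sin\theta_3\geq\min(\sqrt{3}/2,\sin\gamma)$ is uniformly bounded below. (Equivalently, one may base the frame at the vertex carrying the \emph{middle} angle, which lies in $[\gamma/2,\pi/2]$.) A secondary imprecision: the integral of $\partial_{\mathbf{t}_1}(v-\pi_Tv)$ vanishes only along the edge $A_1A_2$ itself, not along every parallel chord; on interior chords it equals the difference of $(v-\pi_Tv)$ at the two lateral edges, which must itself be estimated via the fundamental theorem of calculus from the vertices --- your phrase ``after pairing with the boundary contributions'' gestures at this but leaves the actual estimate unstated. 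With the frame relocated and that step spelled out, the argument goes through with constants depending only on the maximum-angle bound.
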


\begin{figure}[ht]
	\centering
	\begin{tikzpicture}[scale=1]
		\coordinate (Ah1) at (0,0);
		\coordinate (Ah2) at (3.5,0);
		\coordinate (Ah4) at (0,3.5);
	
		\coordinate (Ah3) at ($(Ah2)!0.4!(Ah4)$);
		\coordinate (Ah5) at ($(Ah1)!0.5!(Ah4)$);
		\draw[dashed] (Ah2) -- (Ah5);
		\draw[dashed] (Ah1) -- (Ah3);
		\draw[thick] (Ah1) -- (Ah2) -- (Ah3) --(Ah5)--cycle;
		\coordinate (Mh1) at ($(Ah1)!0.5!(Ah2)$);
		\coordinate (Mh2) at ($(Ah2)!0.5!(Ah3)$);
		\coordinate (Mh3) at ($(Ah3)!0.5!(Ah4)$);
		\coordinate (Mh4) at ($(Ah4)!0.5!(Ah5)$);
		\coordinate (Mh5) at ($(Ah5)!0.5!(Ah1)$);
		\coordinate (Mh6) at ($(Ah3)!0.5!(Ah5)$);
		
		\foreach \M in {Mh1,Mh2,Mh5} {\fill (\M) circle (2pt);}
		\draw (Mh6) circle (2pt);
		\node[below]      at (Mh1) {$\hat m_1$};
		\node[right]      at (Mh2) {$\hat m_2$};
		\node[left]       at (Mh5) {$\hat m_5$};
		\node[above] at (Mh6) {$\hat m_6$};
		
		\node[below]      at (Ah1) {$\hat {A}_1$};
		\node[below]      at (Ah2) {$\hat {A}_2$};
		\node[right]      at (Ah3) {$\hat {A}_3$};
		\node[left]       at (Ah5) {$\hat {A}_5$};
		
	\end{tikzpicture}
	\caption{The reference quadrilateral element $\hat{Q}$}
	\label{split}
\end{figure}
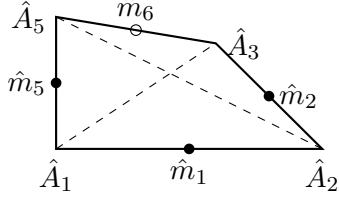
For elements $K \in \mathcal{T}_h^N$, the triangular subelements generated by the discrete interface $\Gamma_h$ satisfy the maximum angle condition. The interpolation error estimates on these triangles follow from the preceding lemmas. We now focus on the interpolation error analysis for quadrilateral subelements $Q \subset K\in \mathcal{T}_h^{\Gamma}$.

Let $\hat{Q}$ denote the quadrilateral subelement of a reference interface element $\hat{K}$ (see Figure \ref{split}), and let $\hat{T}_{ijk}$ represent the triangular subelement with vertices $\hat{A}_i$, $\hat{A}_j$, and $\hat{A}_k$. Define $\hat{\pi}^c$ as the standard linear conforming nodal interpolation operator on triangle $\hat{T}_{125}$.

The following trace theorem \cite{Acosta2000} provides geometrically explicit constants:
\begin{lemma}[Trace theorem]\label{lem:trace}
	For any triangle $T$ with diameter $h_T$ and edge $e$,
	\begin{equation}
		\|v\|_{L^2(e)} \leq \left( 2 \frac{|e|}{|T|} \right)^{1/2} \left( \|v\|_{L^2(T)} + h_T |v|_{H^1(T)} \right) \quad \forall v \in H^1(T).
	\end{equation}
\end{lemma}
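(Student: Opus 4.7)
The plan is to prove this trace inequality via a divergence-theorem identity with a carefully chosen vector field whose boundary flux is supported only on $e$. With $V$ denoting the vertex of $T$ opposite $e$, I would consider the position vector field $\vec F(x) = x - V$. Two properties of $\vec F$ are decisive: $\nabla \cdot \vec F = 2$ is constant, and $\vec F \cdot \vec n \equiv 0$ on each of the two edges through $V$ (since those edges lie along rays from $V$, so their outward normals are perpendicular to $\vec F$ there). On the remaining edge $e$, the outward normal $\vec n$ is constant and $\vec F(x) \cdot \vec n = H$ for every $x \in e$, where $H$ is the perpendicular distance from $V$ to the line containing $e$, i.e., the height of $T$ with base $e$.

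Applying the divergence theorem to $v^2 \vec F$ over $T$, the boundary integral collapses to a single term on $e$, producing
\[
H \|v\|_{L^2(e)}^2 = 2 \|v\|_{L^2(T)}^2 + 2 \int_T v\,(\vec F \cdot \nabla v) \, dA.
\]
I would then control the cross term via Cauchy--Schwarz together with the pointwise bound $|\vec F(x)| = |x - V| \leq h_T$, giving $\bigl|\int_T v \,\vec F \cdot \nabla v \, dA\bigr| \leq h_T \|v\|_{L^2(T)} |v|_{H^1(T)}$. The resulting right-hand side factors as $2\|v\|_{L^2(T)}\bigl(\|v\|_{L^2(T)} + h_T |v|_{H^1(T)}\bigr) \leq 2\bigl(\|v\|_{L^2(T)} + h_T |v|_{H^1(T)}\bigr)^2$. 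Dividing by $H$ and invoking the elementary identity $|T| = \tfrac12 |e| H$ to rewrite $2/H = |e|/|T| \leq 2|e|/|T|$ yields the claim after taking square roots. A standard density argument extends the estimate from $C^1(\overline T)$ to $H^1(T)$.

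The only real subtlety is the choice of vector field; once $\vec F(x) = x - V$ is fixed, the verification that its flux vanishes on the two edges meeting at $V$ and equals $H$ on $e$ is immediate from elementary geometry, and the rest is routine manipulation. Crucially, the argument uses no shape-regularity hypothesis on $T$: the constant depends only on the geometric quantities $|e|$, $|T|$, and $h_T$. This is precisely what is needed for the anisotropic-mesh interpolation analysis that follows in the sequel.
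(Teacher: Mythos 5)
Your proof is correct, and it is the standard divergence-theorem argument (with the field $x-V$, $V$ the vertex opposite $e$) that underlies the result the paper only cites from Acosta and Dur\'an without reproducing a proof. In fact your computation yields the sharper constant $\left(\frac{|e|}{|T|}\right)^{1/2}$ before you deliberately weaken it to match the stated factor $\left(2\frac{|e|}{|T|}\right)^{1/2}$, so nothing is missing.
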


The next interpolation estimate originates from Acosta and Durán \cite{Acosta2000}; we provide a concise proof for completeness.
\begin{lemma}[Acosta and Durán, 2000]\label{lem:acosta}
	For any $\hat{v} \in H^2(\hat{Q})$, we have  
	\begin{equation}
		|\hat{v} - \hat{\pi}^c \hat{v}|_{H^1(\hat{Q})} \lesssim |\hat{v}|_{H^2(\hat{Q})}.
	\end{equation}
\end{lemma}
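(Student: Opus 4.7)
The plan is to combine a Bramble--Hilbert reduction on $\hat{Q}$ with the standard linear interpolation estimate on the shape-regular sub-triangle $\hat{T}_{125}$, exploiting the fact that $\hat{\pi}^c w$ is a single affine function on $\mathbb{R}^2$ (extended from $\hat{T}_{125}$ to all of $\hat{Q}$).

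First I would pick a linear polynomial $\hat{p}$ so that $\int_{\hat{Q}}(\hat{v}-\hat{p})=0$ and $\int_{\hat{Q}}\nabla(\hat{v}-\hat{p})=0$, and invoke the Poincar\'e--Wirtinger inequality on the convex domain $\hat{Q}$. Since $\hat{Q}$ lies in the closed unit triangle and therefore has diameter at most $\sqrt{2}$, the Payne--Weinberger bound on convex sets furnishes a constant independent of $s,t$, yielding $\|\hat{v}-\hat{p}\|_{H^2(\hat{Q})}\lesssim|\hat{v}|_{H^2(\hat{Q})}$. Setting $w:=\hat{v}-\hat{p}$ and using $\hat{\pi}^c\hat{p}=\hat{p}$, the identity $\hat{v}-\hat{\pi}^c\hat{v}=w-\hat{\pi}^c w$ and the triangle inequality reduce the proof to showing $|\hat{\pi}^c w|_{H^1(\hat{Q})}\lesssim\|w\|_{H^2(\hat{Q})}$ uniformly in $s,t$.

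Next I would transfer the estimate to the sub-triangle $\hat{T}_{125}$, which has vertices $(0,0),(1,0),(0,t)$ and maximum angle exactly $\pi/2$. The preceding Babu\v{s}ka-type lemma gives $|w-\hat{\pi}^c w|_{H^1(\hat{T}_{125})}\lesssim|w|_{H^2(\hat{T}_{125})}\le|\hat{v}|_{H^2(\hat{Q})}$, and a triangle inequality then yields $|\hat{\pi}^c w|_{H^1(\hat{T}_{125})}\lesssim|\hat{v}|_{H^2(\hat{Q})}$. Because $\hat{\pi}^c w$ has a constant gradient, one has the exact identity
\begin{equation*}
|\hat{\pi}^c w|_{H^1(\hat{Q})}=\bigl(|\hat{Q}|/|\hat{T}_{125}|\bigr)^{1/2}\,|\hat{\pi}^c w|_{H^1(\hat{T}_{125})},
\end{equation*}
so the whole argument hinges on controlling the area ratio.

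The main obstacle is precisely this ratio, and here is where the standing assumption $s\le t$ becomes essential. A direct computation gives $|\hat{Q}|=(s+t-st)/2$ and $|\hat{T}_{125}|=t/2$, whence $|\hat{Q}|/|\hat{T}_{125}|=1+s(1-t)/t$, which is uniformly bounded by $2$ when $s\le t<1$ but would blow up otherwise. This normalization $t\ge s$, enforced by reflection earlier in the section, guarantees that integration over the extra triangle $\hat{T}_{235}=\hat{Q}\setminus\hat{T}_{125}$ amplifies the interpolation error by at most a fixed factor. Combining the three ingredients with a final triangle inequality completes the proof.
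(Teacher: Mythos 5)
Your argument is correct, but it follows a genuinely different route from the paper's. The paper uses Acosta and Dur\'an's original mechanism: it sets $\bm{w}=\nabla(\hat v-\hat\pi^c\hat v)$, splits $\bm{w}$ into its mean over $\hat Q$ plus a fluctuation, controls the fluctuation by the Poincar\'e inequality on the convex set $\hat Q$, and controls the mean by observing that the tangential derivative of the interpolation error has vanishing integral along the edges $\hat e_1$ and $\hat e_5$, combined with the trace inequality of Lemma~\ref{lem:trace} on $\hat T_{125}$. You instead do a Bramble--Hilbert reduction on $\hat Q$, invoke the Babu\v{s}ka-type maximum-angle interpolation estimate on $\hat T_{125}$ (whose largest angle is exactly $\pi/2$ for every $t$, so that estimate is uniform even though the triangle degenerates as $t\to 0$ -- note it is \emph{not} shape-regular, contrary to your opening sentence; what you actually use, correctly, is the maximum angle condition), and then transfer the bound from $\hat T_{125}$ to $\hat Q$ via the exact scaling identity for the constant gradient of the affine interpolant together with the area ratio $|\hat Q|/|\hat T_{125}|=1+s(1-t)/t\le 2$. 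Both proofs ultimately hinge on the same geometric fact $|\hat Q|\sim|\hat T_{125}|$, which is where $s\le t$ enters; your version makes that dependence fully explicit, which is a nice clarification. The trade-off is that your shortcut exploits that $\hat\pi^c\hat v$ extends to a single affine function on all of $\hat Q$, so it would not carry over to, say, bilinear interpolation on general quadrilaterals, whereas the edge-integral/trace argument in the paper is the one that generalizes.
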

\begin{proof}
	Let $\bm{w} = (w_1, w_2)^\top = \nabla (\hat{v} - \hat{\pi}^c \hat{v})$, it is sufficient to prove
	\begin{equation*}
		\|\bm{w}\|_{L^2(\hat{Q})}\lesssim |\bm{w}|_{H^1(\hat{Q})}.
	\end{equation*}
	Let $\bar{w}=(\bar{w}_1,\bar{w}_2)^t= \frac{1}{|\hat{Q}|}\int_{\hat{Q}} wds$, since $\int_{\hat{e}_1}w_1(x,0)ds=0$, we have
	\begin{align*}
	\|\bar{w}_1\|_{L^2(\hat{Q})} &=|\hat{Q}|^{1/2}|\bar{w}_1|\\
	&=|\hat{Q}|^{1/2}\big|\int_{\hat{e}_1} (w_1-\bar{w}_1)ds\big|\\
	&\lesssim |\hat{Q}|^{1/2}|\hat{e}_1|^{1/2}\|w_1-\bar{w}_1\|_{L^2(\hat{e}_1)}\\
	&\lesssim |\hat{Q}|^{1/2}|\hat{e}_1|^{1/2} \frac{|\hat{e}_1|^{1/2}}{|\hat{T}_{125}|^{1/2}}(\|w_1-\bar{w}_1\|_{L^2(\hat{T}_{125})}+|w_1|_{H^1(\hat{T}_{125})})\\
	&\lesssim \frac{|\hat{Q}|^{1/2}}{|T_{125}|^{1/2}}|\hat{e_1}||w_1|_{H^1(\hat{T}_{125})}\quad (\text{using } |\hat{Q}|\sim |T_{125}|)\\
	&\lesssim |w_1|_{H^1(\hat{Q})}.
   \end{align*}
	 Similarly, $\|\bar{w}_2\|_{L^2(\hat{Q})} \lesssim |w_2|_{H^1(\hat{Q})}$. Finally,
	\begin{align*}
		\|\bm{w}\|_{L^2(\hat{Q})} &\leq \|\bm{w} - \bar{\bm{w}}\|_{L^2(\hat{Q})} + \|\bar{\bm{w}}\|_{L^2(\hat{Q})} \\
		&\lesssim |\bm{w}|_{H^1(\hat{Q})}. 
	\end{align*}
\end{proof}

Applying Lemma \ref{lem:acosta}, we derive the following interpolation error estimate on $\hat{Q}$:

\begin{lemma}\label{lem:quad-interp}
	For any $\hat{v} \in H^2(\hat{Q})$, the following estimate holds:
	\begin{equation}
		|\hat{v} - \hat{\pi}_{\hat{Q}} \hat{v}|_{H^1(\hat{Q})} \lesssim |\hat{v}|_{H^2(\hat{Q})}.
	\end{equation}
\end{lemma}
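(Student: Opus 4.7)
The plan is to reduce this estimate to Lemma~\ref{lem:acosta} via the triangle inequality
\[
|\hat{v} - \hat{\pi}_{\hat{Q}}\hat{v}|_{H^1(\hat{Q})} \leq |\hat{v} - \hat{\pi}^c\hat{v}|_{H^1(\hat{Q})} + |\hat{\pi}^c\hat{v} - \hat{\pi}_{\hat{Q}}\hat{v}|_{H^1(\hat{Q})}.
\]
The first term is already controlled by Lemma~\ref{lem:acosta}, so all the work concentrates on the correction term $\hat{\pi}^c\hat{v} - \hat{\pi}_{\hat{Q}}\hat{v}$, which is a single linear polynomial on $\hat{Q}$.

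The algebraic simplification relies on polynomial preservation: both $\hat{\pi}^c$ and $\hat{\pi}_{\hat{Q}}$ reproduce linear polynomials (for $\hat{\pi}_{\hat{Q}}$ this is because any $P_1$ function on an edge has midpoint value equal to the average of its endpoint values, so the three midpoint conditions on $\hat{Q}$ are automatically satisfied by $\hat{v}$ itself). Setting $w = \hat{v} - \hat{\pi}^c \hat{v}$, one obtains $\hat{\pi}^c\hat{v} - \hat{\pi}_{\hat{Q}}\hat{v} = -\hat{\pi}_{\hat{Q}} w$. Expanding in the nodal basis,
\[
\hat{\pi}_{\hat{Q}} w \big|_{\hat{Q}} = \sum_{i \in \{1,2,5\}} \mathcal{N}_i(w)\,\hat{\phi}_i\big|_{\hat{Q}},
\]
since $\hat{\phi}_3, \hat{\phi}_4$ vanish on $\hat{Q}$ by construction. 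Because $w$ vanishes at $\hat{A}_1, \hat{A}_2, \hat{A}_5$ by the very definition of $\hat{\pi}^c$, only the degree of freedom $\mathcal{N}_2(w) = \tfrac{1}{2} w(\hat{A}_3)$ survives, so Lemma~\ref{lem:basis-estimate} yields
\[
|\hat{\pi}_{\hat{Q}} w|_{H^1(\hat{Q})} \lesssim |\hat{Q}|^{-1/2}\,|w(\hat{A}_3)|.
\]

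The main obstacle is the pointwise bound $|w(\hat{A}_3)| \lesssim |\hat{Q}|^{1/2}\,|\hat{v}|_{H^2(\hat{Q})}$ with a constant independent of the cut parameters $s, t$. The plan is to integrate $\partial_\tau w$ along the cut edge $\hat{e}_6 = \overline{\hat{A}_5 \hat{A}_3}$ starting from $\hat{A}_5$, where $w$ vanishes, giving
\[
|w(\hat{A}_3)| \leq |\hat{e}_6|^{1/2}\,\|\partial_\tau w\|_{L^2(\hat{e}_6)},
\]
and then to apply Lemma~\ref{lem:trace} on a triangular sub-element of $\hat{Q}$ having $\hat{e}_6$ as one side so as to convert the edge norm into interior $H^1$ and $H^2$ norms of $w$. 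Using $|w|_{H^2(\hat{Q})} = |\hat{v}|_{H^2(\hat{Q})}$ (because $\hat{\pi}^c\hat{v}$ is linear) and the Acosta--Dur\'an-type bound $|w|_{H^1(\hat{Q})} \lesssim |\hat{v}|_{H^2(\hat{Q})}$ already embedded in the proof of Lemma~\ref{lem:acosta}, the geometric prefactors $|\hat{e}_6|$, $|T|$, $|\hat{Q}|$ must cancel uniformly in $s$ and $t$; tracking these ratios carefully is the delicate part. Substituting the resulting bound for $|w(\hat{A}_3)|$ back into the chain of inequalities above, and combining with Lemma~\ref{lem:acosta} for the remaining term, completes the proof.
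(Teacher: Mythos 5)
Your setup is exactly the paper's: triangle inequality against $\hat{\pi}^c\hat{v}$, reduction of the correction term to the single surviving degree of freedom $\tfrac12 w(\hat{A}_3)\,\hat{\phi}_2$ with $w=\hat{v}-\hat{\pi}^c\hat{v}$, and the bound $|\hat{\phi}_2|_{H^1(\hat{Q})}\lesssim|\hat{Q}|^{-1/2}$ from Lemma~\ref{lem:basis-estimate}. The gap is in the one step that actually carries the difficulty: the pointwise bound on $w(\hat{A}_3)$. You propose to integrate $\partial_\tau w$ along the cut edge $\hat{e}_6=\overline{\hat{A}_5\hat{A}_3}$ and then apply the trace inequality on a sub-triangle of $\hat{Q}$ having $\hat{e}_6$ as a side. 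The required cancellation of geometric factors does not occur there. Take $s=t=\epsilon\to0$: then $|\hat{e}_6|=\bigl((1-\epsilon)^2+0\bigr)^{1/2}\to1$, while the only sub-triangles of $\hat{Q}$ with $\hat{e}_6$ as an edge, $\hat{T}_{135}$ and $\hat{T}_{235}$, have areas $\tfrac12 t(1-s)$ and $\tfrac12 s(1-t)$, both of order $\epsilon$. Your chain gives $|w(\hat{A}_3)|\lesssim |\hat{e}_6|\,|T|^{-1/2}\,|\hat{v}|_{H^2(\hat{Q})}\sim\epsilon^{-1/2}|\hat{v}|_{H^2(\hat{Q})}$, and multiplying by $|\hat{\phi}_2|_{H^1(\hat{Q})}\lesssim|\hat{Q}|^{-1/2}\sim\epsilon^{-1/2}$ leaves a constant of order $\epsilon^{-1}$, not $O(1)$. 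No choice of sub-triangle inside $\hat{Q}$ rescues this, and going outside $\hat{Q}$ (into $\hat{T}$) is not permitted since the right-hand side only involves $|\hat{v}|_{H^2(\hat{Q})}$ and, in the application, $\hat v$ loses regularity across $\hat{e}_6$.

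The paper's fix is to exploit the other vertex at which $w$ vanishes: integrate $\partial_\tau(\hat{\pi}^c\hat{v}-\hat{v})$ along the \emph{short} edge $\hat{e}_2=\overline{\hat{A}_2\hat{A}_3}$ from $\hat{A}_2$, and apply the trace inequality on $\hat{T}_{123}$, whose long base $\hat{e}_1$ keeps its area at $\tfrac12 s$ even though $|\hat{e}_2|=s\sqrt2$ is tiny. This yields $|w(\hat{A}_3)|\lesssim |\hat{e}_2|\,|\hat{T}_{123}|^{-1/2}\,|\hat{v}|_{H^2(\hat{Q})}\lesssim s^{1/2}|\hat{v}|_{H^2(\hat{Q})}\le t^{1/2}|\hat{v}|_{H^2(\hat{Q})}$, which exactly cancels $|\hat{\phi}_2|_{H^1(\hat{Q})}\lesssim t^{-1/2}$ (using $|\hat{Q}|=\tfrac12(t+s(1-t))\ge t/2$). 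Everything else in your argument is sound; replacing the integration path $\hat{e}_6$ by $\hat{e}_2$ and the sub-triangle by $\hat{T}_{123}$ turns your outline into the paper's proof.
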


\begin{proof}
	Using the definitions of $\hat{\pi}^c$ and $\hat{\pi}_{\hat{Q}}$, it follows that
	\begin{align*}
		\hat{\pi}^c \hat{v} - \hat{\pi}_{\hat{Q}} \hat{v} &= \sum_{i \in \{1,2,5\}} \left[ \left( \hat{\pi}^c \hat{v} \right)(\hat{m}_i) - \left( \hat{\pi}_{\hat{Q}} \hat{v} \right)(\hat{m}_i) \right] \hat{\phi}_i \\
		&= \left[ \left( \hat{\pi}^c \hat{v} \right)(\hat{m}_2) - \left( \hat{\pi}_{\hat{Q}} \hat{v} \right)(\hat{m}_2) \right] \hat{\phi}_2 \\
		&= \frac{1}{2} \left( \hat{\pi}^c \hat{v} - \hat{v} \right)(\hat{A}_3) \hat{\phi}_2.
	\end{align*}
	Since $\left( \hat{\pi}^c \hat{v} - \hat{v} \right)(\hat{A}_2) = 0$, we derive
	\begin{align*}
		\left( \hat{\pi}^c \hat{v} - \hat{v} \right)(\hat{A}_3) &= \left| \int_{\hat{e}_2} \partial_\tau \left( \hat{\pi}^c \hat{v} - \hat{v} \right) \, ds \right| \\
		&\leq |\hat{e}_2|^{1/2} \, \left| \hat{\pi}^c \hat{v} - \hat{v} \right|_{H^1(\hat{e}_2)} \\
		&\lesssim \frac{|\hat{e}_2|}{|\hat{T}_{123}|^{1/2}} \left( \left| \hat{\pi}^c \hat{v} - \hat{v} \right|_{H^1(\hat{T}_{123})} + |\hat{v}|_{H^2(\hat{T}_{123})} \right) \\
		&\lesssim \frac{|\hat{e}_2|}{|\hat{T}_{123}|^{1/2}} |\hat{v}|_{H^2(\hat{Q})} \\
		&\lesssim t^{1/2} |\hat{v}|_{H^2(\hat{Q})}.
	\end{align*}
	Using Lemma \ref{lem:basis-estimate}, we have	
	\begin{align*}
		|\hat{\phi}_2|_{H^1(\hat{Q})} \lesssim t^{-1/2}.
	\end{align*}
	Therefore, through the triangle inequality, we obtain
	\begin{align*}
		|\hat{v} - \hat{\pi}_{\hat{Q}} \hat{v}|_{H^1(\hat{Q})} &\leq |\hat{v} - \hat{\pi}^c \hat{v}|_{H^1(\hat{Q})} + |\hat{\pi}^c \hat{v} - \hat{\pi}_{\hat{Q}} \hat{v}|_{H^1(\hat{Q})} \\
		&\lesssim |\hat{v}|_{H^2(\hat{Q})} + \left| \left( \hat{\pi}^c \hat{v} - \hat{v} \right)(\hat{A}_3) \right| \, |\hat{\phi}_2|_{H^1(\hat{Q})} \\
		&\lesssim |\hat{v}|_{H^2(\hat{Q})}.
	\end{align*}
\end{proof}

Using a similar argument as Theorem 4.1 in \cite{Hu2021optimal}, we have
\begin{theorem} \label{pih}
For any $v\in H^2(\Omega_1\cup\Omega_2)$, we have
	\begin{align}
		|v-\pi_h v|_{H^1(\Omega)}\lesssim h|v|_{H^2(\Omega_{1}\cup\Omega_{2})}.
	\end{align}
\end{theorem}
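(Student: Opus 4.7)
The plan is to pass from the global $H^1$ bound to element-wise estimates via $|v - \pi_h v|_{H^1(\Omega)}^2 = \sum_{K \in \widetilde{\mathcal{T}}_h} |v - \pi_K v|_{H^1(K)}^2$, prove each local term satisfies $|v - \pi_K v|_{H^1(K)}^2 \lesssim h_K^2 \bigl( |v|_{H^2(K \cap \Omega_{1})}^2 + |v|_{H^2(K \cap \Omega_{2})}^2 \bigr)$, and then sum using $h_K \leq h$. On a non-interface element $K \in \mathcal{T}_h^N$, $v$ is $H^2$ on $K$ and by the preceding remark $\pi_K v$ coincides with the standard Lagrange $P_1$ interpolant on the shape-regular triangle $K$, so classical Bramble--Hilbert theory delivers the bound immediately.

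For an interface macro-element $K \in \mathcal{T}_h^\Gamma$, I push the error to the reference macro-element $\hat{K}$ via the affine map $F_K$. Shape regularity of the background mesh $\mathcal{T}_h$ yields $|\det B|^{1/2}\|B^{-1}\| \lesssim 1$ and $\|B\| \lesssim h_K$, so the scaling chain already indicated in the excerpt reduces the task to proving $|\hat{v} - \hat{\pi}_{\hat{K}}\hat{v}|_{H^1(\hat{K})} \lesssim |\hat{v}|_{H^2(\hat{K})}$ uniformly in the cut parameters $(s,t)$. I split $\hat{K} = \hat{T} \cup \hat{Q}$ and treat the two subelements separately. The triangular subelement $\hat{T}$ satisfies the maximum angle condition uniformly in $(s,t)$ by the Hu--Wang construction, and $\hat{\pi}_{\hat{K}}\hat{v}$ restricted to $\hat{T}$ is precisely the $P_1$ Lagrange interpolant at the three vertices of $\hat{T}$; the Babuška-type lemma recalled above therefore gives $|\hat{v} - \hat{\pi}_{\hat{K}}\hat{v}|_{H^1(\hat{T})} \lesssim |\hat{v}|_{H^2(\hat{T})}$. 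On the quadrilateral subelement $\hat{Q}$, Lemma \ref{lem:quad-interp} provides the corresponding estimate $|\hat{v} - \hat{\pi}_{\hat{K}}\hat{v}|_{H^1(\hat{Q})} \lesssim |\hat{v}|_{H^2(\hat{Q})}$ uniformly in the anisotropy. Combining the two and scaling back produces the desired per-element bound.

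The main genuine difficulty, already navigated in Theorem 4.1 of \cite{Hu2021optimal}, is that $v \in H^2(\Omega_1 \cup \Omega_2)$ is \emph{not} globally $H^2$ across $T$ or $Q$: these subelements are carved by the discrete interface $\Gamma_h$ rather than $\Gamma$ itself, and a thin $O(h^2)$-wide mismatch strip may sit inside the ``wrong'' subdomain. I would resolve this by invoking a Stein extension $\tilde{v}_i \in H^2(\Omega)$ of each $v|_{\Omega_i}$ with $\|\tilde{v}_i\|_{H^2(\Omega)} \lesssim \|v\|_{H^2(\Omega_i)}$, applying the subelement estimates to $\tilde{v}_i$ on the portion of the subelement lying in $\Omega_{i,h}$, and absorbing the contribution of the mismatch strip by a standard sliver bound as in \cite{Hu2021optimal}. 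Apart from this extension bookkeeping, the one genuinely new input is Lemma \ref{lem:quad-interp}, which supplies the aspect-ratio-free interpolation estimate on $\hat{Q}$ that prevents any anisotropy loss from entering the final bound.
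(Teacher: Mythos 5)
Your proposal is correct and follows essentially the same route the paper intends: the paper gives no detailed proof beyond citing Theorem 4.1 of \cite{Hu2021optimal}, but the argument it has assembled — elementwise splitting, the affine scaling to $\hat{K}$, the Babu\v{s}ka maximum-angle estimate on $\hat{T}$, Lemma \ref{lem:quad-interp} on $\hat{Q}$, and the extension-operator treatment of the $O(h^2)$ mismatch strip between $\Gamma$ and $\Gamma_h$ — is exactly what you reconstruct. Your observation that $\hat{\pi}_{\hat{K}}\hat{v}|_{\hat{T}}$ is the vertex Lagrange interpolant is a genuine (and correct) point worth checking: it holds because the constraint $[\![\hat{\pi}_{\hat{K}}\hat{v}(\hat{m}_6)]\!]=0$ together with the vertex-averaged midpoint values on $\hat{e}_1,\hat{e}_2,\hat{e}_5$ forces the $\hat{Q}$-side value at $\hat{m}_6$ to equal $(v(\hat{A}_3)+v(\hat{A}_5))/2$.
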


\subsection{Consistency  Error Analysis}
In this section, we analyze the consistency error for the nonconforming finite element space. By definition, this error is expressed as
\begin{equation}\label{E_h}
	\begin{aligned}
		E_h(u,v_h) &= a_h(u,v_h) - (f,v_h) \\
		&= \sum_{i=1}^{2} \sum_{K \in \mathcal{T}_{h,i}} (\beta_i \nabla u, \nabla v_h)_K - \int_{\Omega} f v_h  dx \\
		&= \sum_{i=1}^{2} \sum_{K \in \mathcal{T}_{h,i}} \big( (\beta_{i+1} - \beta_i) \nabla u, \nabla v_h \big)_{K \cap \Omega_i} 
		+ \sum_{K} \sum_{e \subset \partial K} \int_{e} \beta \frac{\partial u}{\partial \boldsymbol{n}} v_h  ds \\
		&=: E_1(u,v_h) + E_2(u,v_h)
	\end{aligned}
\end{equation}
The consistency error for nonconforming elements in interface problems thus decomposes into two components: 
$E_1(u, v_h)$ arises from piecewise linear approximation of the curved interface, whereas $E_2(u, v_h)$ represents the intrinsic consistency error of the nonconforming method. 
The analysis of $E_1(u, v_h)$ is established in \cite{Hu2021optimal}. 

\begin{lemma}\label{E1}
	For any $u\in \tilde{H}^2(\Omega_1\cup\Omega_2)$, the following estimate holds:
	\begin{equation}
		E_1(u,v_h) \lesssim h \|u\|_{H^2(\Omega_1 \cup \Omega_2)} \|v_h\|_{V_h},
	\end{equation}
	where $\|\cdot\|_{V_h}$ denotes the piecewise $H^1$ semi-norm. 
\end{lemma}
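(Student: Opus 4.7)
The plan is to show that $E_1(u,v_h)$ is supported on a thin tubular strip between $\Gamma$ and $\Gamma_h$, and then bound the two factors of a Cauchy--Schwarz splitting. First I would rewrite $E_1$ to expose this support explicitly: on each $K\in\widetilde{\mathcal{T}}_{h,i}$ the discrete coefficient $\beta_h$ equals $\beta_i$ throughout $K$, while the true $\beta$ equals $\beta_{3-i}$ on $K\cap\Omega_{3-i}$; hence the integrand in $E_1$ vanishes outside the symmetric-difference set
\begin{equation*}
S_h \;:=\; \bigcup_{i=1,2}\bigcup_{K\in\widetilde{\mathcal{T}}_{h,i}} K\cap \Omega_{3-i},
\end{equation*}
and $E_1(u,v_h) = \sum_{i=1,2}(\beta_i-\beta_{3-i})\int_{S_h\cap\Omega_{3-i}}\nabla u\cdot\nabla v_h\,dx$.

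Next I would invoke the geometric fact that $\Gamma_h$ is the chord polygon of a smooth curve $\Gamma$ with chord length $O(h)$, so $\operatorname{dist}(x,\Gamma)\lesssim h^2$ for every $x\in S_h$. In other words, $S_h$ sits inside a tubular neighborhood of $\Gamma$ of width $O(h^2)$. Splitting $S_h = (S_h\cap\Omega_1)\cup(S_h\cap\Omega_2)$ and applying Cauchy--Schwarz yields
\begin{equation*}
|E_1(u,v_h)| \;\lesssim\; \|\nabla u\|_{L^2(S_h)}\,\|\nabla v_h\|_{L^2(S_h)}.
\end{equation*}
The bound on $v_h$ is immediate, since $\|\nabla v_h\|_{L^2(S_h)}\leq \|\nabla v_h\|_{L^2(\Omega)} = \|v_h\|_{V_h}$ by the definition of the broken seminorm.

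The crux is the $O(h)$ bound on $\|\nabla u\|_{L^2(S_h)}$. For this I would use a standard tubular-strip inequality: choosing local coordinates $(s,r)$ where $s$ is arclength on $\Gamma$ and $r$ is the signed normal distance, and noting that this change of variables has Jacobian bounded away from $0$ and $\infty$ on a fixed neighborhood of $\Gamma$, Fubini gives
\begin{equation*}
\|w\|_{L^2(S_h\cap\Omega_i)}^2 \;\lesssim\; \int_0^{Ch^2}\!\!\int_{\Gamma_r\cap\Omega_i} w^2\,d\sigma\,dr \;\lesssim\; h^2\sup_{0<r<Ch^2}\|w\|_{L^2(\Gamma_r\cap\Omega_i)}^2,
\end{equation*}
where $\Gamma_r$ denotes the parallel curve at distance $r$. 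The standard continuous trace estimate uniformly in $r$ then yields $\|w\|_{L^2(S_h\cap\Omega_i)}\lesssim h\,\|w\|_{H^1(\Omega_i)}$. Applying this to each Cartesian component of $\nabla u$ on each $\Omega_i$ (where $\nabla u\in H^1$ since $u\in\tilde H^2(\Omega_1\cup\Omega_2)$) produces $\|\nabla u\|_{L^2(S_h)}\lesssim h\,\|u\|_{H^2(\Omega_1\cup\Omega_2)}$, and the lemma follows.

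The main obstacle is the tubular-strip estimate in the last paragraph, since it is exactly the place where the geometry of $\Gamma$ (its smoothness, which guarantees that the signed-distance parametrization is a diffeomorphism on a neighborhood independent of $h$) and the interface regularity of $u$ (being $H^2$ only in each subdomain) must interact cleanly. Everything else is bookkeeping: the reduction of $E_1$ to a strip integral, Cauchy--Schwarz, and the trivial $v_h$ estimate are all algebraic.
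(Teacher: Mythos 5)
Your proposal is correct, and it is essentially the argument the paper relies on: the paper does not reprove this lemma but defers to \cite{Hu2021optimal}, where the bound is obtained in exactly this way --- the mismatch between $\beta$ and $\beta_h$ is supported in the region between $\Gamma$ and its chord polygon $\Gamma_h$, which lies in an $O(h^2)$-tube around $\Gamma$, and the $L^2$ norm of $\nabla u$ there is controlled by $h\|u\|_{H^2(\Omega_1\cup\Omega_2)}$ via the standard strip/trace estimate $\|w\|_{L^2(S_\delta)}\lesssim \sqrt{\delta}\,\|w\|_{H^1(\Omega_i)}$ with $\delta\sim h^2$. The only implicit hypothesis you should state is that $\Gamma$ is $C^2$ (so that the chord polygon deviates by $O(h^2)$ and the signed-distance coordinates are a uniform diffeomorphism near $\Gamma$), which is assumed throughout the paper.
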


This subsection focuses on rigorous analysis of $E_2(u, v_h)$. For notational brevity, define $\bm{\eta} = \beta \nabla u$ and $E(\bm{\eta}, v_h) := E_2(u, v_h)$. 
Although $\bm{\eta}$ possesses $H^1$-regularity in each subdomain $\Omega_i$, it may lack $H^1$-regularity over individual elements. 
To address this issue, we invoke the Sobolev extension theorem: there exist operators $T_i : H^1(\Omega_i) \to H^1(\Omega)$ satisfying
\begin{equation}
	T_i v_i\big|_{\Omega_i} = v_i \quad \text{and} \quad \|T_i v_i\|_{H^1(\Omega)} \lesssim \|v_i\|_{H^1(\Omega_i)}, \quad i=1,2.
\end{equation}
Define the piecewise extension operator
\begin{equation}
	T\bm{\eta} :=
	\begin{cases}
		T_1 \bm{\eta} & \text{in } \Omega_{1,h}, \\
		T_2 \bm{\eta} & \text{in } \Omega_{2,h}.
	\end{cases}
\end{equation}
Consequently, the consistency error decomposes as
\begin{align}
	E(\bm{\eta}, v_h) = E(T\bm{\eta}, v_h) + E(\bm{\eta} - T\bm{\eta}, v_h).
\end{align}
Prior to estimating the consistency error, we introduce notation that will be used throughout the subsequent analysis. For a function $v \in H^1(K)$, let $v^{(e)}$ denote the trace of $v$ on edge $e$. We define
\begin{align*}
	\bar{v}^{(e)} &:= \frac{1}{|e|} \int_{e} v  ds, \\
	\delta v^{(e)} &:= v^{(e)} - \bar{v}^{(e)}.
\end{align*}
For vector-valued functions $\bm{\eta}$, we denote its components by $\eta_x$ and $\eta_y$. When the superscript is replaced by an element $K$ (e.g., $\bar{v}^{(K)}$), the definitions carry analogous meanings over the element $K$. For an interface macro element $K$ (see Figure \ref{fig:macro-element}), we frequently analyze its subtriangular components. To enhance notational brevity, superscripts or subscripts may be omitted when no ambiguity arises.

The following refined trace theorem on anisotropic triangle elements (cf. [Dura2020]) accounts for element geometry.
\begin{lemma}
	Let $K$ be an arbitrary tirangle, for any edge $e \subset \partial K$, the following inequality holds:
	\begin{equation}\label{eq:aniso-trace}
		\|v\|_{L^{2}(e)} \leq \left( \frac{|e|}{|K|} \right)^{1/2} \left( \|v\|_{L^2(K)} + h_K |v|_{H^1(K)} \right).
	\end{equation}
\end{lemma}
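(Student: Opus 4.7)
The plan is to obtain the bound via a divergence-theorem identity that converts the edge integral of $v^2$ into a volume integral over $K$, with a vector field chosen so that the trace of its normal component is exactly the characteristic function of $e$. This is the standard Acosta--Dur\'an style argument, and the geometric constants $|e|/|K|$ and $h_K$ will fall out of the construction without any reference element or shape-regularity assumption.

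Concretely, let $\bm{p}$ denote the vertex of $K$ opposite to $e$ and let $d$ be the distance from $\bm{p}$ to the line containing $e$, so that $|K|=|e|d/2$. I would define
\begin{equation*}
\bm{F}(\bm{x}) = \frac{\bm{x}-\bm{p}}{d}.
\end{equation*}
On $e$, the component of $(\bm{x}-\bm{p})/d$ in the outward normal direction equals $1$; on the other two edges, $\bm{x}-\bm{p}$ is parallel to the edge and hence $\bm{F}\cdot \bm{n}=0$. Applying the divergence theorem to $v^2 \bm{F}$ and using $\mathrm{div}\,\bm{F}=2/d$ gives
\begin{equation*}
\int_e v^2\, ds = \int_K \bigl(2 v\, \nabla v\cdot \bm{F} + \tfrac{2}{d}\, v^2\bigr)\, d\bm{x}.
\end{equation*}

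The next step is to insert the bounds $|\bm{F}(\bm{x})|\leq h_K/d$ on $K$ and $1/d = |e|/(2|K|)$, and then apply the Cauchy--Schwarz inequality to the first term. This yields
\begin{equation*}
\int_e v^2\, ds \;\leq\; \frac{|e|}{|K|}\Bigl(\|v\|_{L^2(K)}^2 + h_K\, \|v\|_{L^2(K)}\,|v|_{H^1(K)}\Bigr).
\end{equation*}
To finish, I would use the elementary algebraic inequality $a^2+ab \leq (a+b)^2$ for $a,b\geq 0$ with $a=\|v\|_{L^2(K)}$ and $b=h_K|v|_{H^1(K)}$, and take the square root to arrive at the stated bound.

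The argument is essentially mechanical; the only delicate point is choosing the vector field $\bm{F}$ so that its divergence is constant and its normal trace isolates exactly the edge $e$. Once that construction is fixed, no minimum-angle condition enters: all geometric information is encoded in the scalar factor $|e|/|K|$ (equivalently, in the height $d$), which is precisely why the estimate is suitable for the anisotropic subtriangles appearing in the macro-element analysis of Section~3.4.
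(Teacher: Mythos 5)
Your proof is correct and complete: the paper states this lemma without proof (citing Dur\'an), and your divergence-theorem argument with $\bm{F}(\bm{x})=(\bm{x}-\bm{p})/d$ is precisely the standard construction behind that reference. The key computations all check out --- $\bm{F}\cdot\bm{n}=1$ on $e$ and $0$ on the two edges meeting $\bm{p}$, $\operatorname{div}\bm{F}=2/d$, $1/d=|e|/(2|K|)$, and the factors of $2$ cancel so that the constant is exactly $\left(|e|/|K|\right)^{1/2}$ with no shape-regularity assumption, as the statement requires.
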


The following lemma provides an estimate for $E(\bm{\eta} - T\bm{\eta}, v_h)$.
 
\begin{lemma}\label{lemma:eta-Teta}
	For any $\boldsymbol{\eta} \in \big( H^1(\Omega_1 \cup \Omega_2) \big)^2$, 
	\begin{equation}\label{eq:eta-diff}
		E(\boldsymbol{\eta} - T\boldsymbol{\eta}, v_h) \lesssim h \|\boldsymbol{\eta}\|_{H^1(\Omega_1 \cup \Omega_2)} \|v_h\|_{V_h}.
	\end{equation}
\end{lemma}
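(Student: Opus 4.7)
My plan hinges on two structural features of $\bm{\zeta}:=\bm{\eta}-T\bm{\eta}$. Since $T_i\bm{\eta}$ agrees with $\bm{\eta}$ on $\Omega_i$, the difference $\bm{\zeta}$ vanishes on the bulk $\Omega_{i,h}\cap\Omega_i$ and is supported only on the mismatch slivers $S_i:=\Omega_{i,h}\setminus\Omega_i$, which lie in a strip of width $O(h^2)$ about $\Gamma$. Moreover, on $\Gamma$ the trace of $\bm{\zeta}$ from $S_i$ equals $\bm{\eta}|_{\Omega_{3-i}}-\bm{\eta}|_{\Omega_i}$, whose normal component is zero by the interface condition $[\bm{\eta}\cdot\bm{n}]_{\Gamma}=0$. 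The key global estimate I would establish is
\[
\|\bm{\zeta}\|_{L^2(S_1\cup S_2)}\lesssim h\,\|\bm{\eta}\|_{H^1(\Omega_1\cup\Omega_2)},
\]
obtained from a one-dimensional integration in the direction normal to $\Gamma$ across the $O(h^2)$-wide strip, combined with the trace theorem and the continuity $\|T_i\bm{\eta}\|_{H^1(\Omega)}\lesssim\|\bm{\eta}\|_{H^1(\Omega_i)}$.

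Next I would split $E(\bm{\zeta},v_h)$ according to the type of edge. For interior edges $e\in\widetilde{\mathcal{E}}_h^{N}$ (not on $\Gamma_h$) the trace of $\bm{\zeta}$ is single-valued across $e$, so the two element contributions combine into $\int_e(\bm{\zeta}\cdot\bm{n}_e)[v_h]\,ds$; using $\int_e[v_h]\,ds=0$ one may subtract $\overline{\bm{\zeta}\cdot\bm{n}_e}^{(e)}$ before Cauchy--Schwarz. For $e\in\widetilde{\mathcal{E}}_h^{\Gamma}$ lying on $\Gamma_h$, $\bm{\zeta}$ is two-valued: zero on the side of $\Omega_{i,h}$ containing $e$ and $\pm(T_1\bm{\eta}-T_2\bm{\eta})$ on the other, so the contribution reduces to $\int_e(T_1\bm{\eta}-T_2\bm{\eta})\cdot\bm{n}_e\,v_h\,ds$. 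For boundary edges $\int_e v_h\,ds=0$ is used analogously. On each edge, I would control the $\bm{\zeta}$-factor by the anisotropic trace inequality \eqref{eq:aniso-trace} applied to the (possibly anisotropic) sliver sub-elements, and the $v_h$-factor by an analogous Poincar\'e-type estimate driven by $\|v_h\|_{V_h}$; summation combined with the narrow-strip bound yields the desired $h\|\bm{\eta}\|_{H^1(\Omega_1\cup\Omega_2)}\|v_h\|_{V_h}$ control for the $\widetilde{\mathcal{E}}_h^{N}$ and boundary contributions.

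The main obstacle is the $\Gamma_h$-edges, where the jump $T_1\bm{\eta}-T_2\bm{\eta}$ is not small in $L^2(\Gamma_h)$ on its own. The extra $h$ factor there must be harvested from the interface condition: since the normal component $(T_1\bm{\eta}-T_2\bm{\eta})\cdot\bm{n}_{\Gamma}=[\bm{\eta}]_{\Gamma}\cdot\bm{n}_{\Gamma}$ vanishes on $\Gamma$, and $\Gamma_h$ is $O(h^2)$-close to $\Gamma$ with $O(h)$ normal-vector deviation, a Taylor expansion in the normal direction combined with the trace theorem applied to $T_i\bm{\eta}\in H^1(\Omega)$ produces
\[
\|(T_1\bm{\eta}-T_2\bm{\eta})\cdot\bm{n}_{\Gamma_h}\|_{L^2(\Gamma_h)}\lesssim h\,\|\bm{\eta}\|_{H^1(\Omega_1\cup\Omega_2)},
\]
which together with $\|v_h\|_{L^2(\Gamma_h)}\lesssim\|v_h\|_{V_h}$ (via elementwise anisotropic trace) finishes the estimate on $\widetilde{\mathcal{E}}_h^{\Gamma}$ and completes the proof.
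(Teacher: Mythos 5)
Your proposal is correct in its essential strategy, but it takes a genuinely different---and in one important respect more complete---route than the paper's proof. The paper immediately rewrites $E(\bm{\eta}-T\bm{\eta},v_h)$ as $\sum_{e\in\widetilde{\mathcal{E}}_h^{\Gamma}}\int_e\delta\omega^{(e)}\delta v_h^{(e)}\,ds$ with $\omega=(T_i\bm{\eta}-T_j\bm{\eta})\cdot\bm{n}$, and then concentrates on controlling the anisotropic trace constant $|e|/|T_k|\lesssim h_T^{-1}$ by selecting a suitable sub-triangle $T_k$ of the interface macro-element according to the cut parameter $t$; each $\delta$-factor then yields an extra power of $h$. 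What that opening identity silently discards is exactly the term you isolate as the main obstacle: on a $\Gamma_h$-edge the nonzero one-sided trace of $\bm{\eta}-T\bm{\eta}$ pairs with the one-sided trace of $v_h$, whose edge average does not vanish, so a term of the form $\bar\omega^{(e)}\int_e v_h\,ds$ survives; it is only $O(1)$ per unit length of $\Gamma_h$ unless one exploits $(T_1\bm{\eta}-T_2\bm{\eta})\cdot\bm{n}_\Gamma=[\![\beta\,\partial u/\partial\bm{n}]\!]=0$ on $\Gamma$ together with the $O(h^2)$ proximity of $\Gamma_h$ to $\Gamma$ and the $O(h)$ deviation of the normals. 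Your final paragraph supplies precisely this ingredient, and it is indispensable: without the flux-continuity condition the asserted bound fails for a general $\bm{\eta}\in\big(H^1(\Omega_1\cup\Omega_2)\big)^2$, so the lemma must in any case be read with $\bm{\eta}=\beta\nabla u$. In short, the paper buys brevity by working only with mean-subtracted quantities and a clever sub-triangle choice; your argument buys a justification of the reduction itself by tracking where the geometric consistency of $\Gamma_h$ and the physical jump condition actually enter.

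Two steps in your sketch still need tightening. First, $\|v_h\|_{L^2(\Gamma_h)}\lesssim\|v_h\|_{V_h}$ does not follow from the elementwise anisotropic trace inequality alone: that inequality leaves a contribution $h^{-1}\|v_h\|_{L^2(K)}^2$ per interface element, which must be absorbed by a narrow-band estimate $\|v_h\|_{L^2(\cup_{K\in\mathcal{T}_h^{\Gamma}}K)}^2\lesssim h\,\|v_h\|_{V_h}^2$ for the broken space together with the discrete Poincar\'e inequality; both rely on the zero-mean jump conditions and deserve a line of proof. Second, on edges of $\widetilde{\mathcal{E}}_h^{N}$ you should not apply the trace inequality on the sliver sub-elements, whose area is $O(h^3)$ so that $(|e|/|S|)^{1/2}=O(h^{-1})$ is too large to be useful; the cleaner observation is that the mismatch regions meet such edges only at isolated points, hence the trace of $\bm{\eta}-T\bm{\eta}$ vanishes a.e.\ there and those contributions are simply zero. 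With these repairs your route closes the argument.
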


\begin{proof}
	Define $\bm{\omega} = (T_i\boldsymbol{\eta} - T_j\boldsymbol{\eta}) \cdot \bm{n}$. 
	Applying the trace theorem, Cauchy-Schwarz inequality, and standard interpolation estimates yields:
	\begin{equation*}
		\begin{aligned}
			E(\boldsymbol{\eta} - T\boldsymbol{\eta}, v_h)
			&= \sum_{e \in \mathcal{E}_h^{\Gamma}} \int_{e} \delta\omega^{(e)} \delta v_h^{(e)}  ds \\
			&\lesssim \sum_{T \in \mathcal{T}_h^{\Gamma}} \frac{|e|}{|T_k|} \left( \|\delta\omega^{(T_k)}\|_{0,T_k} + h_{T_k} |\delta\omega^{(T_k)}|_{1,T_k} \right)
			\left( \|\delta v_h^{(T_k)}\|_{0,T_k} + h_{T_k} |\delta v_h^{(T_k)}|_{1,T_k} \right) \\
			&\lesssim \left( \frac{|e|}{|T_k|} \right) h^2 \|\bm{\eta}\|_{H^1(\Omega_1 \cup \Omega_2)} \|v_h\|_{V_h}.
		\end{aligned}
	\end{equation*}
	
	Consider the interface position parameter $t \in (0,1)$ (see Figure \ref{fig:macro-element}) :
	\begin{itemize}
		\item For $0 < t \leq \frac{1}{2}$, select $T_k = T_{345}$ with area $|T_k| = \frac{1}{2}|e_3||e_4|\sin\theta_{345} \lesssim (1-t)|e_6|$. 
		This implies $\frac{|e|}{|T_k|} \lesssim (1-t)^{-1}h_T^{-1} \lesssim h_T^{-1}$.
		
		\item For $\frac{1}{2} < t < 1$, choose $T_k = T_{135}$ with area $|T_k| = \frac{1}{2}|e_5||e_6|\sin\theta_{135} \lesssim t h_T|e_6|$. 
		Consequently, $\frac{|e|}{|T_k|} \lesssim t^{-1}h_T^{-1} \lesssim h_T^{-1}$.
	\end{itemize}
	Combining both cases, we obtain $\frac{|e|}{|T_k|} \lesssim h_T^{-1}$, which completes the proof of \eqref{eq:eta-diff}.
\end{proof}

Since each quadrilateral element in the mesh $\widetilde{\mathcal{T}}_h$ can be subdivided along one diagonal into two triangular subelements satisfying the maximum angle condition, we denote the resulting refined mesh by $\widetilde{\mathcal{T}}_h^R$. The consistency error term $E(\bm{\eta}, v_h)$ can then be expressed as
\begin{align*}
	E(\bm{\eta}, v_h) = \sum_{K \in \widetilde{\mathcal{T}}_h^R} \sum_{e \subset \partial K} \int_e (\bm{\eta} \cdot \bm{n}) v_h  ds.
\end{align*}
The continuity of $v_h$ within quadrilateral elements implies that the contributions along the subdivided diagonals vanish. Consequently, it suffices to estimate the consistency error for triangular elements satisfying the maximum angle condition. These include both original elements in $\widetilde{\mathcal{T}}_h$ and subelements generated by diagonal subdivision of quadrilaterals.

Building upon the refined trace theorem, we establish two fundamental estimates:

\begin{lemma}\label{lemma:edge-estimates}
	Let $T$ be an arbitrary triangular element with diameter $h_T$, and $e$ an edge of $T$. Then:
	\begin{align}
		\|\delta v^{e}\|_{L^2(e)} &\lesssim \left( \frac{|e|}{|T|} \right)^{1/2} h_T |v|_{H^1(T)}. \label{eq:gen-est}
	\end{align}
	Moreover, if $v \in P_1(T)$, the following improved estimate holds:
	\begin{align}
		\|\delta v^{e}\|_{L^2(e)} &\lesssim \left( \frac{|e|}{|T|} \right)^{1/2} |e| |v|_{H^1(T)}. \label{eq:p1-est}
	\end{align}
\end{lemma}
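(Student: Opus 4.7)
The plan is to reduce both bounds to the anisotropic trace inequality \eqref{eq:aniso-trace} of the preceding lemma, supplemented for the general estimate by a shape-independent Poincar\'e inequality.

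For the general estimate \eqref{eq:gen-est}, I first observe that $\delta v^{(e)}$ is invariant under replacing $v$ by $v-c$ for any constant $c$, so I may work with $\tilde v := v - \bar v^{(T)}$, which has zero mean over $T$ and satisfies $|\tilde v|_{H^1(T)} = |v|_{H^1(T)}$. The Payne--Weinberger inequality on the convex set $T$ gives
\[
\|\tilde v\|_{L^2(T)} \leq \tfrac{h_T}{\pi}\,|v|_{H^1(T)},
\]
with a constant depending only on the diameter, not on the shape of $T$. Substituting $\tilde v$ into \eqref{eq:aniso-trace} then yields
\[
\|\tilde v\|_{L^2(e)} \lesssim \Big(\tfrac{|e|}{|T|}\Big)^{1/2}\bigl(\|\tilde v\|_{L^2(T)} + h_T\,|\tilde v|_{H^1(T)}\bigr) \lesssim \Big(\tfrac{|e|}{|T|}\Big)^{1/2} h_T\,|v|_{H^1(T)}.
\]
By Cauchy--Schwarz, $|\overline{\tilde v}^{(e)}|\,|e|^{1/2} \leq \|\tilde v\|_{L^2(e)}$, so the triangle inequality gives $\|\delta v^{(e)}\|_{L^2(e)} = \|\tilde v - \overline{\tilde v}^{(e)}\|_{L^2(e)} \leq 2\|\tilde v\|_{L^2(e)}$, proving \eqref{eq:gen-est}.

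For the improved bound \eqref{eq:p1-est} when $v \in P_1(T)$, I would exploit that $\nabla v$ is a constant vector, so the tangential derivative $\partial_\tau v = \bm{\tau}\cdot\nabla v$ is constant along $e$. Parametrizing $e$ by arclength $s\in[0,|e|]$, the restriction $v|_e$ is affine and a direct computation yields
\[
\delta v^{(e)}(s) = \Bigl(s - \tfrac{|e|}{2}\Bigr)\partial_\tau v, \qquad \|\delta v^{(e)}\|_{L^2(e)}^2 = \frac{|e|^3}{12}\,|\partial_\tau v|^2.
\]
The pointwise bound $|\partial_\tau v| \leq |\nabla v| = |v|_{H^1(T)}/|T|^{1/2}$ then immediately delivers \eqref{eq:p1-est}, with explicit constant $1/\sqrt{12}$.

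The main obstacle is securing a Poincar\'e constant in the first step that does not depend on the (possibly very anisotropic) shape of $T$; a naive scaling from a fixed reference triangle would reintroduce aspect-ratio dependence and defeat the purpose of the anisotropic factor $(|e|/|T|)^{1/2}$ in the stated bound. The Payne--Weinberger estimate on convex domains resolves this cleanly, and no maximum angle condition on $T$ is needed for either estimate.
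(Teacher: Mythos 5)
Your proposal is correct. For the general estimate \eqref{eq:gen-est} you follow essentially the paper's route: subtract the element mean $\bar v^{(T)}$, control $\|v-\bar v^{(T)}\|_{L^2(T)}$ by a shape-independent Poincar\'e inequality, and apply the anisotropic trace inequality \eqref{eq:aniso-trace}. The only cosmetic difference is the last step: the paper uses the $L^2(e)$-best-approximation property of the edge mean, $\|v-\bar v^{(e)}\|_{L^2(e)}\le\|v-\bar v^{(T)}\|_{L^2(e)}$, which gives constant $1$ where your triangle-inequality-plus-Cauchy--Schwarz argument gives $2$; your explicit appeal to Payne--Weinberger makes precise a constant the paper leaves implicit, and you are right that this is the point where a naive reference-element scaling would fail. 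For the $P_1$ estimate \eqref{eq:p1-est} you take a genuinely different and more elementary route. The paper first applies the one-dimensional Poincar\'e inequality along the edge, $\|\delta v^{(e)}\|_{L^2(e)}\le |e|\,|v|_{H^1(e)}$, and then bounds the tangential derivative on $e$ by the trace inequality \eqref{eq:aniso-trace} applied to $\nabla v$, using that $|v|_{H^2(T)}=0$ for $v\in P_1(T)$. You instead compute $\|\delta v^{(e)}\|_{L^2(e)}^2=\tfrac{|e|^3}{12}|\partial_\tau v|^2$ exactly for the affine trace and use $|T|\,|\nabla v|^2=|v|_{H^1(T)}^2$. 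Your version yields the explicit constant $1/\sqrt{12}$ and bypasses the trace inequality entirely, at the cost of being tied to $P_1$; the paper's chain is the one that would extend to general $v\in H^2(T)$ (picking up an extra $h_T|v|_{H^2(T)}$ term). Both arguments are valid, and, as you note, neither requires a maximum angle condition on $T$.
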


\begin{proof}
	Utilizing the $L^2$-projection property of the integral mean and the trace theorem, we derive:
	\begin{equation*}
		\begin{aligned}
			\|\delta v^{e}\|_{L^2(e)} 
			&= \|v - \bar{v}^{e}\|_{L^2(e)} \\
			&\leq \|v - \bar{v}^{T}\|_{L^2(e)} \\
			&\lesssim \left( \frac{|e|}{|T|} \right)^{1/2} \left( \|v - \bar{v}^{T}\|_{L^2(T)} + h_T |v|_{H^1(T)} \right) \\
			&\lesssim \left( \frac{|e|}{|T|} \right)^{1/2} h_T |v|_{H^1(T)}.
		\end{aligned}
	\end{equation*}
	When $v \in P_1(T)$, the Poincaré inequality provides enhanced approximation:
	\begin{equation*}
		\begin{aligned}
			\|\delta v^{e}\|_{L^2(e)} 
			&\leq |e||v|_{H^1(e)} \\
			&\lesssim \left( \frac{|e|}{|T|} \right)^{1/2} |e|\left( |v |_{H^1(T)} + h_T|v|_{H^2(T)} \right) \\
			&= \left( \frac{|e|}{|T|} \right)^{1/2} |e| |v|_{H^1(T)}.
		\end{aligned}
	\end{equation*}
\end{proof}

The mesh $\tilde{\mathcal{T}}_h$ comprises both triangular and quadrilateral elements, where triangular elements satisfy the maximum angle condition, and quadrilateral elements satisfy the RDP condition. This property ensures that each quadrilateral can be subdivided along its longer diagonal into two triangles satisfying the maximum angle condition. We denote the resulting interface-refined mesh by $\tilde{\mathcal{T}}_h^r$, following the work \cite{Xu2016optimal} where $P_1$ conforming elements were employed on this mesh for interface problems. Given the continuity of the finite element space within quadrilateral elements, it suffices to estimate the consistency error on triangular elements $K\in \tilde{\mathcal{T}}_h^r$ satisfying the maximum angle condition.

As rotation and reflection transformations preserve integral values, we consider a representative triangle $K$ with vertices positioned at
\begin{align*}
	A_1(0,0), \quad A_2(h_1,0), \quad A_3(h_2,h_3).
\end{align*}
Denoting the interior angles at vertices $A_i$ by $\theta_i$, without loss of generality, we assume that $\theta_2 \leq \theta_1 \leq \theta_3$. The corresponding edges are parameterized as follows:
\begin{equation*}
	\begin{aligned}
		e_1 &= \left\{ (x,y) \mid y = y_1(x),\ h_2 \leq x \leq h_1 \right\} 
		= \left\{ (x,y) \mid x = x_1(y),\ 0 \leq y \leq h_3 \right\}, \\
		e_2 &= \left\{ (x,y) \mid y = y_2(x),\ 0 \leq x \leq h_2 \right\} 
		= \left\{ (x,y) \mid x = x_2(y),\ 0 \leq y \leq h_3 \right\}, \\
		e_3 &= \left\{ (x,y) \mid y = 0,\ 0 \leq x \leq h_1 \right\}.
	\end{aligned}
\end{equation*}
Based on the geometric properties of the mesh $\widetilde{\mathcal{T}}_h^r$, the following bounds hold:
\begin{equation}
	\begin{aligned}
		&1\lesssim \theta_1\leq \pi/2,\quad 0\leq \theta_2\leq \pi/2,\label{thetah}\\
		&h_2\leq h_3\leq h_1,\quad 0\leq h_2 \leq h_1/2.
	\end{aligned}
\end{equation}
The line integrals over the element edges can be transformed into coordinate integrals as follows (see Figure~\ref{triangleR} for the correspondence between edges and coordinate axes):
\begin{align*}
	&\text{For edge } e_1: && \int_{e_1} v^{(1)} \sin\theta_2 \, ds = \int_{0}^{h_3} v^{(1)} \, dy, \quad \int_{e_1} v^{(1)} \cos\theta_2 \, ds = \int_{h_2}^{h_1} v^{(1)} \, dx, \\
	&\text{For edge } e_2: && \int_{e_2} v^{(2)} \sin\theta_1 \, ds = \int_{0}^{h_3} v^{(2)} \, dy, \quad \int_{e_2} v^{(2)} \cos\theta_1 \, ds = \int_{0}^{h_2} v^{(2)} \, dx, \\
	&\text{For edge } e_3: && \int_{e_3} v^{(3)} \, ds = \int_{0}^{h_1} v^{(3)} \, dx.
\end{align*}
 Here, $v^{(i)}$ denotes the restriction of $v$ to the edge $e_i$.

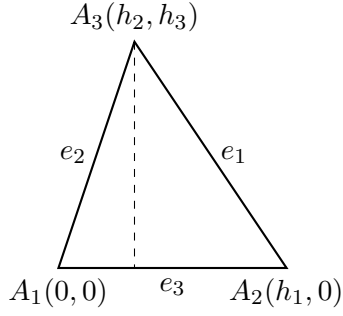
\begin{figure}[ht]
	\centering
	\begin{tikzpicture}[scale=1]
		\coordinate (Ah1) at (0,0);
		\coordinate (Ah2) at (3,0);
		\coordinate (Ah3) at (1,3);
		\coordinate (Ah4) at (1,0);
		\coordinate (Mh3) at ($(Ah1)!0.5!(Ah2)$);
		\coordinate (Mh1) at ($(Ah2)!0.5!(Ah3)$);
		\coordinate (Mh2) at ($(Ah3)!0.5!(Ah1)$);
		\draw[thick] (Ah1) -- (Ah2) -- (Ah3) -- cycle;
		\draw[dashed] (Ah3) -- (Ah4);
		
		\node at ($(Ah1) + (35:0.6cm)$) {$\theta_1$};  
		\node at ($(Ah2) + (150:0.6cm)$) {$\theta_2$}; 
		
		\node[below]      at (Ah1) {$A_1(0,0)$};
		\node[below]      at (Ah2) {$A_2(h_1,0)$};
		\node[above]      at (Ah3) {$A_3(h_2,h_3)$};
		\node[right]       at (Mh1) {$e_1$};
		\node[left]      at (Mh2) {$e_2$};
		\node[below]     at (Mh3) {$e_3$}; 
	\end{tikzpicture}
	\caption{triangle element satisfying Maxac.}
	\label{triangleR}
\end{figure}

\begin{lemma}\label{lemma:Eeta}
	For any $\boldsymbol{\eta}\in (H^1(\Omega_1\cup\Omega_2))^2$ and $v_h \in \mathring{V}_h$, it holds
	\begin{equation}\label{eq:Eeta-bound}
		E(T\boldsymbol{\eta},v_h) \lesssim h \|\boldsymbol{\eta}\|_{H^1(\Omega_1 \cup \Omega_2)} \|v_h\|_{V_h}.
	\end{equation}
\end{lemma}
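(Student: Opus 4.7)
The plan is to invoke the directional decomposition strategy emphasized in the introduction, combined with an edge-mean subtraction that uses the nonconforming condition $\int_e[v_h]\,ds=0$ to cancel the leading terms after assembly.

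First I would reduce to the refined triangular mesh $\widetilde{\mathcal T}_h^r$ obtained by cutting every quadrilateral along its longer diagonal: since $v_h$ is linear on each quadrilateral, the integrals on the newly introduced diagonals cancel between the two daughter triangles, so
\[
E(T\bm{\eta},v_h)=\sum_{K\in\widetilde{\mathcal T}_h^r}\int_{\partial K}(T\bm{\eta}\cdot\bm{n})\,v_h\,ds,
\]
with every $K$ a triangle satisfying the maximum-angle condition and $v_h\in P_1(K)$.

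Next, on each edge I would apply the identity
\[
\int_e(T\bm{\eta}\cdot\bm{n})\,v_h\,ds=\int_e\delta(T\bm{\eta}\cdot\bm{n})^{(e)}\,\delta v_h^{(e)}\,ds+\bar{v}_h^{(e)}\int_eT\bm{\eta}\cdot\bm{n}\,ds,
\]
and examine the two pieces separately after assembly. The mean term cancels across non-interface interior edges (opposite outward normals, $T\bm{\eta}$ single-valued there, and $\bar{v}_h^{(e)}$ shared by the two sides by the nonconforming condition), vanishes on boundary edges ($\bar{v}_h^{(e)}=0$ for $v_h\in\mathring V_h$), and leaves only an $O(h)$ contribution on the interface edges of $\widetilde{\mathcal{E}}_h^\Gamma$ that I would control using the fact that the jump $(T_1-T_2)\bm{\eta}\cdot\bm{n}$ vanishes on the true interface $\Gamma$ together with $\operatorname{dist}(\Gamma_h,\Gamma)=O(h^2)$.

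The essential work is on the deviation term. On each triangle placed in the canonical configuration of Figure~\ref{triangleR}, I would further split
\[
\int_e\delta(T\bm{\eta}\cdot\bm{n})^{(e)}\delta v_h^{(e)}\,ds=n_x\!\int_e\delta(T\eta_x)^{(e)}\delta v_h^{(e)}\,ds+n_y\!\int_e\delta(T\eta_y)^{(e)}\delta v_h^{(e)}\,ds,
\]
and apply Cauchy--Schwarz to each piece. The generic trace bound in Lemma~\ref{lemma:edge-estimates} applies to $\delta(T\eta_j)^{(e)}$ and the sharpened $P_1$ bound applies to $\delta v_h^{(e)}$, producing a prefactor $|n_y|\,|e|^2h_K/|K|$ on the $y$-piece (symmetrically $|n_x|\,|e|^2h_K/|K|$ on the $x$-piece). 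The geometric identities $|n_y|\,|e|=|\pi_x(e)|\leq h$ and $|n_x|\,|e|=|\pi_y(e)|\leq h$ are what turn one factor of $|e|$ into a coordinate projection controlled by the background mesh size. A short case analysis over the three edges $e_1,e_2,e_3$ of the canonical triangle, using $h_2\leq h_3\leq h_1$ together with $\theta_1\gtrsim 1$ from \eqref{thetah}, then shows that the remaining combination is absorbed, giving the per-element bound $h\,|T\bm{\eta}|_{H^1(K)}\,|v_h|_{H^1(K)}$.

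Assembling by Cauchy--Schwarz in $K$ and invoking the $H^1$-stability of the extension operators $\|T_i\bm{\eta}\|_{H^1(\Omega)}\lesssim\|\bm{\eta}\|_{H^1(\Omega_i)}$ delivers the stated bound. The principal obstacle, and the reason the directional decomposition is indispensable here, is the anisotropy factor $|e|/|K|$ appearing inside the anisotropic trace inequality: a raw $L^2(e)$--Cauchy--Schwarz pairing loses control when the element is stretched, and only by converting one copy of $|e|$ into the coordinate projection $|\pi_x(e)|$ or $|\pi_y(e)|$ bounded by $h$ does the proof avoid paying any penalty in the aspect ratio.
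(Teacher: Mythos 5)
Your opening moves coincide with the paper's: the reduction to the refined triangulation $\widetilde{\mathcal T}_h^r$, the subtraction of edge means justified by $\int_e[v_h]\,ds=0$, and the split of the flux into $x$- and $y$-components. The gap is in the core estimate of the deviation term, and as stated the step fails. Applying Cauchy--Schwarz edge by edge and then Lemma \ref{lemma:edge-estimates} to both factors gives, for the $y$-piece on an edge $e$ of a triangle $K$, the prefactor $|n_y|\cdot\frac{|e|}{|K|}\cdot h_K\cdot|e|$. Converting $|n_y||e|$ into the length of the $x$-projection of $e$ (bounded by $h$) removes only one copy of $|e|$; what survives is $\frac{|e|\,h_K}{|K|}$, which is the aspect ratio of $K$. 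Concretely, take the canonical triangle of Figure \ref{triangleR} with $h_3\ll h_1$ (admissible under \eqref{thetah} and the maximum angle condition --- this is exactly the regime the lemma must cover) and let $e=e_3$ be the bottom edge, so $\bm n=(0,-1)$ and the entire edge integral is the $y$-piece. Your bound evaluates to $\frac{h_1}{\frac12 h_1h_3}\cdot h_1\cdot h_1=\frac{2h_1^2}{h_3}$ times the seminorms, i.e.\ $h$ multiplied by the unbounded factor $h_1/h_3$; there is no cancellation available at the level of a single edge, and the same failure occurs for the $y$-piece on the long slanted edge $e_1$. No case analysis over $e_1,e_2,e_3$ can absorb this, so the claimed per-element bound $h\,|T\bm{\eta}|_{H^1(K)}|v_h|_{H^1(K)}$ does not follow.

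This is precisely where the paper's proof departs from a per-edge argument. There, the $y$-flux contributions of all three edges are written as integrals over the common projection $[0,h_1]$ onto the $x$-axis and paired: the difference of two traces at the same abscissa is rewritten as an interior integral $\int\partial_x(\cdot)\,dx$ or $\int\partial_y(\cdot)\,dy$ across the element (the ``intermediate terms''), so the anisotropic trace inequality is never charged to a single long edge in the thin direction; the compensating $\sin\theta$ factors from the change of variables $ds\to dx,dy$ are what cancel the $|e|/|K|$ blow-up. This pairing leaves a genuine residual, the paper's term $(II)_c$ involving $\bar v_h^{(1)}-\bar v_h^{(3)}$, which requires its own device (an auxiliary shape-regular element $K_e$ of diameter $\mathcal O(|e|)$ and a bounded-overlap count). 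Your proposal contains neither the cross-edge pairing nor any counterpart of $(II)_c$, and these are the essential content of the lemma. A secondary point: your treatment of the mean term on interface edges assumes the flux jump of $T\bm\eta$ vanishes on $\Gamma_h$ because it vanishes on $\Gamma$, but the normals of $\Gamma_h$ and $\Gamma$ differ by $O(h)$, so this needs more care; in the paper that contribution is isolated into Lemma \ref{lemma:eta-Teta} rather than handled inside this proof.
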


\begin{proof}
	We decompose the consistency error element-wise:
	\begin{equation*}
		\begin{aligned}
			E(T\boldsymbol{\eta},v_h) 
			&= \sum_{e\in\mathcal{E}_h} \int_e \{T\bm{\eta}\cdot\boldsymbol{n}\}[v_h]  ds \\
			&= \sum_{e\in\mathcal{E}_h} \int_e \{ (T\bm{\eta} - \overline{T\bm{\eta}}) \cdot \boldsymbol{n} \} [v_h - \bar{v}_h]  ds \\
			&= \sum_{K\in\mathcal{T}_h^r} \sum_{e\subset\partial K} \int_e ( (T\bm{\eta} - \overline{T\bm{\eta}}) \cdot \boldsymbol{n} ) (v_h - \bar{v}_h)  ds \\
			&= \sum_{K\in\mathcal{T}_h^r} E_K(T\bm{\eta},v_h),
		\end{aligned}
	\end{equation*}
	where $E_K(T\bm{\eta},v_h) = \sum_{e\subset\partial K} \int_e (\delta T\bm{\eta} \cdot \boldsymbol{n}) \delta v_h  ds$. 
	The $H^1$-regularity of $T\bm{\eta}$ on each $K$ is ensured by the extension operator. 
	For notational brevity when unambiguous, we denote $T\bm{\eta}$ simply by $\bm{\eta}$.
	
	\begin{align*}
		E_K(\bm{\eta},v_h) 
		&= \left( -\int_0^{h_3} \delta \eta_x^{(1)} \delta v_h^{(1)}  dy + \int_0^{h_3} \delta \eta_x^{(2)} \delta v_h^{(2)}  dy \right) \\
		&\quad + \left( \int_0^{h_2} \delta \eta_y^{(2)} \delta v_h^{(2)}  dx + \int_{h_2}^{h_1} \delta \eta_y^{(1)} \delta v_h^{(1)}  dx 
		- \int_{0}^{h_1} \delta \eta_y^{(3)} \delta v_h^{(3)}  dx \right) \\
		&=: (I) + (II).
	\end{align*}
	Utilizing $\int_{e_i} \delta v_h^{(i)}  ds = \int_{e_i} \delta \eta_x^{(i)}  ds = \int_{e_i} \delta \eta_y^{(i)}  ds = 0$, we introduce intermediate terms.
	
	\textbf{Estimation of $(I)$:} Applying the Cauchy-Schwarz inequality, Lemma \ref{lemma:edge-estimates}, and geometric properties \eqref{thetah}:
	\begin{align*}
		(I) &= \int_0^{h_3} \left( \delta\eta_x^{(2)} - \delta\eta_x^{(1)} \right) \delta v_h^{(1)}  dy 
		+ \int_0^{h_3} \delta\eta_x^{(2)} \left( \delta v_h^{(2)} - \delta v_h^{(1)} \right)  dy \\
		&= \int_0^{h_3} \left( \int_{x_1(y)}^{x_3(y)} \partial_x \eta_x  dx \right) \delta v_h^{(1)}  dy
		+ \int_0^{h_3} \delta\eta_x^{(2)} \left( \int_{x_1(y)}^{x_3(y)} \partial_x v_h  dx \right)  dy \\
		&\leq |\sin\theta_2|^{1/2}h_1^{1/2}\|\partial_x\eta_x\|_{L^2(K)}\|\delta v_h^{(1)}\|_{L^2(e_1)} + |\sin\theta_1|^{1/2}h_1^{1/2}\|\delta \eta_x^{(2)}\|_{L^2(e_2)}\|\partial_x v_h\|_{L^2(K)} \\
		&\lesssim |e_1|^{1/2}|\eta_x|_{H^1(K)}|v_h|_{H^1(K)} + |e_3|^{1/2}|\eta_x|_{H^1(K)}|v_h|_{H^1(K)} \\
		&\lesssim h_1|\eta_x|_{H^1(K)}|v_h|_{H^1(K)}.
	\end{align*}
	
	\textbf{Estimation of $(II)$:} After introducing intermediate terms, we decompose:
	\begin{align*}
		(II) &= \int_0^{h_2} (\delta\eta_y^{(2)} - \delta\eta_y^{(3)}) \delta v_h^{(2)}  dx 
		+ \int_{h_2}^{h_1} (\delta \eta_y^{(1)} - \delta \eta_y^{(3)}) \delta v_h^{(1)}  dx \\
		&\quad + \int_{0}^{h_1} \delta \eta_y^{(3)} (v_h^{(2)} - v_h^{(3)})  dx 
		+ \int_{h_2}^{h_1} \delta \eta_y^{(3)} (v_h^{(1)} - v_h^{(3)})  dx \\
		&\quad + \int_{0}^{h_2} \delta \eta_y^{(3)} \bar{v}_h^{(2)}  dx 
		+ \int_{h_2}^{h_1} \delta \eta_y^{(3)} \bar{v}_h^{(1)}  dx \\
		&= (II)_a + (II)_b + (II)_c.
	\end{align*}
	Terms $(II)_a$ and $(II)_b$ are bounded similarly to $(I)$. For $(II)_c$, using geometric properties \eqref{thetah}:
	\begin{align*}
		(II)_c &= (\bar{v}_h^{(2)} - \bar{v}_h^{(1)}) \int_{0}^{h_2} \delta \eta_y^{(3)}  dx \\
		&= h_3^{-1} \left( \int_0^{h_3} v_h^{(2)}  dy - \int_0^{h_3} v_h^{(1)}  dy \right) \int_0^{h_2} \delta \eta_y^{(3)}  dx \\
		&= h_3^{-1} \left( \int_0^{h_3} \int_{x_3(y)}^{x_1(y)} \partial_x v_h  dx dy \right) \int_0^{h_2} \delta \eta_y^{(3)}  dx \\
		&\leq h_3^{-1}h_2^{1/2} |K|^{1/2}\|\partial_x v_h\|_{L^2(K)} \|\delta \eta_y^{(3)}\|_{L^2(e_3)} \\
		&\lesssim h_1 |\eta_y|_{H^1(K)} |v_h|_{H^1(K_{3})}.
	\end{align*}
	Here $K_3$ is a shape-regular triangular element with edge $e_3$ and diameter $\mathcal{O}(h_1)$, which does not necessary belong to the mesh $\tilde{\mathcal{T}}_h^r$. Although adjacent $K_3$ elements may overlap, the overlap is bounded.
	
	Combining estimates for $(I)$, $(II)_a$, $(II)_b$, and $(II)_c$, then summing over $K \in \mathcal{T}_h^r$, we have
	\begin{align*}
		E(T\boldsymbol{\eta},v_h) &\lesssim h \left( \|T_1\boldsymbol{\eta}\|_{H^1(\Omega)} + \|T_2\boldsymbol{\eta}\|_{H^1(\Omega)} \right) \|v_h\|_{V_h} \\
		&\lesssim h \|\boldsymbol{\eta}\|_{H^1(\Omega_1\cup\Omega_2)} \|v_h\|_{V_h}.
	\end{align*}
	The proof completes.
\end{proof}
Combining Lemmas \ref{E1}, \ref{lemma:eta-Teta} and \ref{lemma:Eeta}, we establish the following fundamental estimate for the consistency error:

\begin{theorem}\label{thm:consistency}
	For any $u \in \tilde{H}^2(\Omega_1 \cup \Omega_2)$, the consistency error satisfies
	\begin{equation}
		E_h(u, v_h) \lesssim h \|u\|_{H^2(\Omega_1 \cup \Omega_2)} \|v_h\|_{V_h}.
	\end{equation}
\end{theorem}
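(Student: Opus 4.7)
The plan is to glue together the three lemmas already proved in this subsection. Starting from the decomposition \eqref{E_h}, we have $E_h(u,v_h) = E_1(u,v_h) + E_2(u,v_h)$, where $E_1$ captures the mismatch between the true interface $\Gamma$ and its piecewise linear approximation $\Gamma_h$ (handled directly by Lemma \ref{E1}), and $E_2 = E(\bm{\eta}, v_h)$ with $\bm{\eta} = \beta\nabla u$ is the intrinsic nonconforming jump term. The recipe for $E_2$ is to insert the piecewise Sobolev extension $T\bm{\eta}$ introduced before Lemma \ref{lemma:eta-Teta}, yielding the additive splitting
\begin{equation*}
E_2(u,v_h) = E(T\bm{\eta}, v_h) + E(\bm{\eta} - T\bm{\eta}, v_h),
\end{equation*}
and then to apply the corresponding lemmas to each summand.

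Applying Lemma \ref{lemma:Eeta} to the first summand and Lemma \ref{lemma:eta-Teta} to the second gives
\begin{equation*}
E_2(u,v_h) \lesssim h \|\bm{\eta}\|_{H^1(\Omega_1\cup\Omega_2)} \|v_h\|_{V_h}.
\end{equation*}
To convert the right-hand side back to $u$, I would invoke the piecewise constancy of $\beta$: on each subdomain $\Omega_i$ the coefficient is the constant $\beta_i$, so $\|\bm{\eta}\|_{H^1(\Omega_i)} = \beta_i \|\nabla u\|_{H^1(\Omega_i)} \leq \max(\beta_1,\beta_2)\,|u|_{H^2(\Omega_i)}$, and squaring and summing over $i = 1,2$ yields $\|\bm{\eta}\|_{H^1(\Omega_1\cup\Omega_2)} \lesssim \|u\|_{H^2(\Omega_1\cup\Omega_2)}$. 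Adding this bound for $E_2$ to the estimate for $E_1$ from Lemma \ref{E1} completes the argument.

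There is no genuine obstacle at this stage: all the hard analytic work has been absorbed into the preceding lemmas, in particular Lemma \ref{lemma:Eeta}, whose directional decomposition produces the correct $h$-scaling on triangles that merely satisfy the maximum angle condition. The only points to keep track of are the piecewise constancy of $\beta$ (so that $\nabla(\beta\nabla u)$ makes sense subdomain-wise without boundary-layer artefacts from $\beta$), the fact that the test-function norm $\|v_h\|_{V_h}$ appearing in every lemma is exactly the piecewise $H^1$ seminorm used in the theorem statement, and that the suppressed constants depend on $\beta_1,\beta_2$ through $\max(\beta_1,\beta_2)$, which is harmless since $\beta$ is fixed problem data.
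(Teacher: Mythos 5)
Your proposal is correct and follows exactly the paper's route: the paper itself proves Theorem \ref{thm:consistency} simply by combining Lemmas \ref{E1}, \ref{lemma:eta-Teta} and \ref{lemma:Eeta} via the decomposition \eqref{E_h} and the extension splitting $E(\bm{\eta},v_h)=E(T\bm{\eta},v_h)+E(\bm{\eta}-T\bm{\eta},v_h)$. Your additional remark converting $\|\bm{\eta}\|_{H^1(\Omega_1\cup\Omega_2)}$ to $\|u\|_{H^2(\Omega_1\cup\Omega_2)}$ via the piecewise constancy of $\beta$ is a detail the paper leaves implicit, and it is handled correctly.
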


\subsection{Error estimates for the interface problems}
With the consistency error bound established in Theorem \ref{thm:consistency} and the interpolation estimates from previous sections, we now derive the main convergence result for the interface problem:

\begin{theorem}\label{thm:main-error}
	Let $u \in \tilde{H}^2(\Omega_1 \cup \Omega_2)$ and $u_h$ be solutions of problems $(P)$ and $(P_h)$, respectively. The following optimal-order error estimate holds:
	\begin{align}
		\|u - u_h\|_{V_h} \lesssim h \|u\|_{H^2(\Omega_1 \cup \Omega_2)}.
	\end{align}
\end{theorem}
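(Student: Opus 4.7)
The plan is to follow the standard second Strang lemma route, using the two ingredients already proved: the interpolation estimate in Theorem \ref{pih} and the consistency bound in Theorem \ref{thm:consistency}. Since the broken bilinear form $a_h$ is, by construction, uniformly coercive on $\mathring{V}_h$ with respect to the piecewise $H^1$ semi-norm $\|\cdot\|_{V_h}$ (with constants depending only on $\min(\beta_1,\beta_2)$), the proof is essentially a bookkeeping exercise.

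First I would fix an arbitrary $v_h \in \mathring{V}_h$ and write
\begin{equation*}
\min(\beta_1,\beta_2)\,\|u_h - v_h\|_{V_h}^2 \;\lesssim\; a_h(u_h - v_h,\, u_h - v_h).
\end{equation*}
Setting $w_h := u_h - v_h \in \mathring{V}_h$ and splitting, I would use the discrete equation $a_h(u_h,w_h) = F(w_h)$ to get
\begin{equation*}
a_h(u_h - v_h, w_h) \;=\; F(w_h) - a_h(v_h, w_h) \;=\; a_h(u - v_h, w_h) \;-\; \bigl(a_h(u,w_h) - F(w_h)\bigr).
\end{equation*}
The last parenthesis is exactly the consistency error $E_h(u,w_h)$ defined in \eqref{E_h}, so the continuity of $a_h$ together with Theorem \ref{thm:consistency} yields
\begin{equation*}
\|u_h - v_h\|_{V_h} \;\lesssim\; \|u - v_h\|_{V_h} \;+\; h\,\|u\|_{H^2(\Omega_1\cup\Omega_2)}.
\end{equation*}

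Next I would specialize to $v_h = \pi_h u$, which is admissible since the interpolant lies in $\mathring{V}_h$ (all averaged jumps vanish and boundary averages are zero by the definition of $\pi_h$). Applying Theorem \ref{pih} controls $\|u - \pi_h u\|_{V_h}$ by $h\,|u|_{H^2(\Omega_1\cup\Omega_2)}$, and a final triangle inequality
\begin{equation*}
\|u - u_h\|_{V_h} \;\leq\; \|u - \pi_h u\|_{V_h} \;+\; \|\pi_h u - u_h\|_{V_h}
\end{equation*}
delivers the stated bound. I do not anticipate a genuine obstacle here, since all heavy lifting has been done in Theorems \ref{pih} and \ref{thm:consistency}; the only mild subtlety is to verify that $\pi_h u \in \mathring{V}_h$ (so the boundary-edge condition defining $\mathring{V}_h$ is met) and that the hidden constants in the Strang argument are independent of the anisotropy of $\widetilde{\mathcal{T}}_h$, which they are because both coercivity and continuity of $a_h$ only use the $L^2$-orthogonality inherent to the broken energy norm.
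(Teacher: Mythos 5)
Your proposal is correct and follows essentially the same route as the paper: the paper simply invokes Strang's second lemma and combines Theorem \ref{pih} with Theorem \ref{thm:consistency}, while you unpack the lemma's standard coercivity--consistency derivation explicitly. The extra care you take (verifying $\pi_h u \in \mathring{V}_h$ and anisotropy-independence of the constants) is sound but does not change the argument.
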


\begin{proof}
	Applying Strang's second lemma for nonconforming methods:
	\begin{align*}
		\|u - u_h\|_{V_h} &\lesssim \inf_{w_h \in V_h} \|u - w_h\|_{V_h} + \sup_{v_h \in V_h \backslash \{0\}} \frac{|E_h(u, v_h)|}{\|v_h\|_{V_h}} \\
		&\lesssim \|u - \pi_h u\|_{V_h} + \sup_{v_h \in V_h} \frac{|E_h(u, v_h)|}{\|v_h\|_{V_h}} \\
		&\lesssim h \|u\|_{H^2(\Omega_1 \cup \Omega_2)} \quad \text{(by Theorems \ref{pih} and \ref{thm:consistency})}.
	\end{align*}
\end{proof}

\section{Numerical examples}
In this section, we present several numerical examples to validate the theoretical results of our nonconforming element method. The validation is primarily conducted from three perspectives:
\begin{itemize}
	\item Approximation Capability for Low-Regularity Solutions: By varying the discontinuous coefficient (specifically, testing with coefficient jumps $\beta_1/\beta_2$ ranging from $10^4$ to $10^{-4}$), we examine the finite element method's ability to approximate solutions with low regularity.
	
	\item Interface Geometry Robustness: We verify the method’s robustness with respect to interface geometry, investigating its performance across interfaces of different shapes.
	
	\item Robustness to Interface and Element Cut Positions: We examine the robustness of the method with respect to the relative positions of the interface and element cuts.
\end{itemize}

\subsection{Example 1: Circular Interface}
To first validate the approximation capability for low-regularity solutions under a simple benchmark geometry, we consider a circular interface in Example 1. Circular interfaces are symmetric, widely used as a baseline for interface problems, and allow clear isolation of the impact of discontinuous coefficients on solution regularity. The computational domain is $\Omega$ is $(-1,1)\times(-1,1).$ The interface is a circle centered at the origin with radius $r=0.6,$ i.e.,
\begin{equation*}
	\varPhi_{\Gamma}(x,y) = x^2+y^2-(0.6)^2.
\end{equation*}
The exact solution is chosen as follows
\begin{equation*}
	u = \frac{1}{\beta}\varPhi_{\Gamma}(x,y)(x^2-1)(y^2-1).
\end{equation*}
Numerical tests are performed for viscosity jump ratios $\beta1/\beta2=10^4,10^2,1,10^{-2}$ and $10^{-4}.$ Numerical results are shown in Tables \ref{e1:1}-\ref{e1:5}.
\begin{figure}[H]
	\vspace{-10pt}
	\begin{minipage}{0.45\linewidth}        	
		\centering
		\includegraphics[scale=0.4]{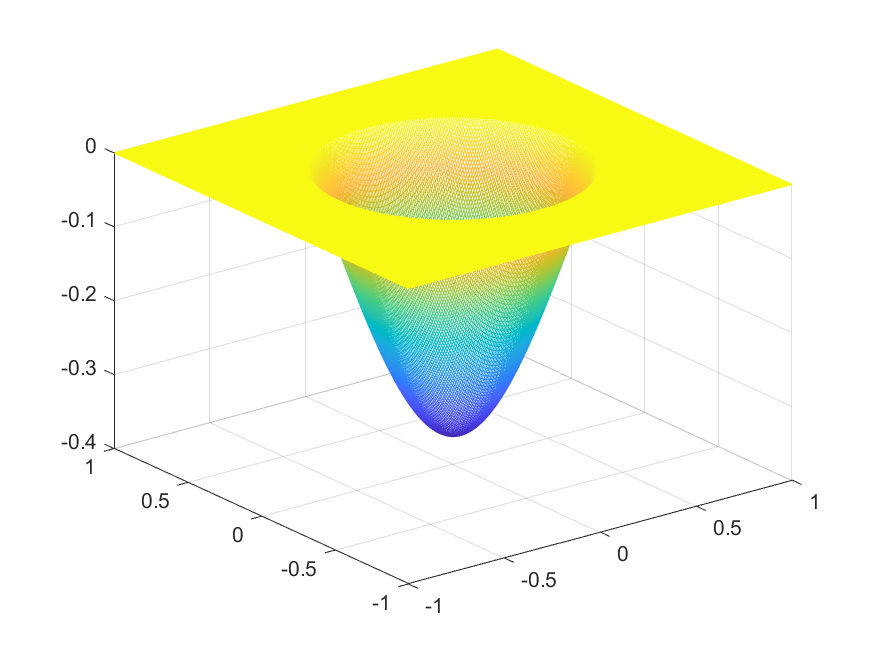}      	
	\end{minipage}
	\begin{minipage}{0.45\linewidth}
		\centering
		\includegraphics[scale=0.4]{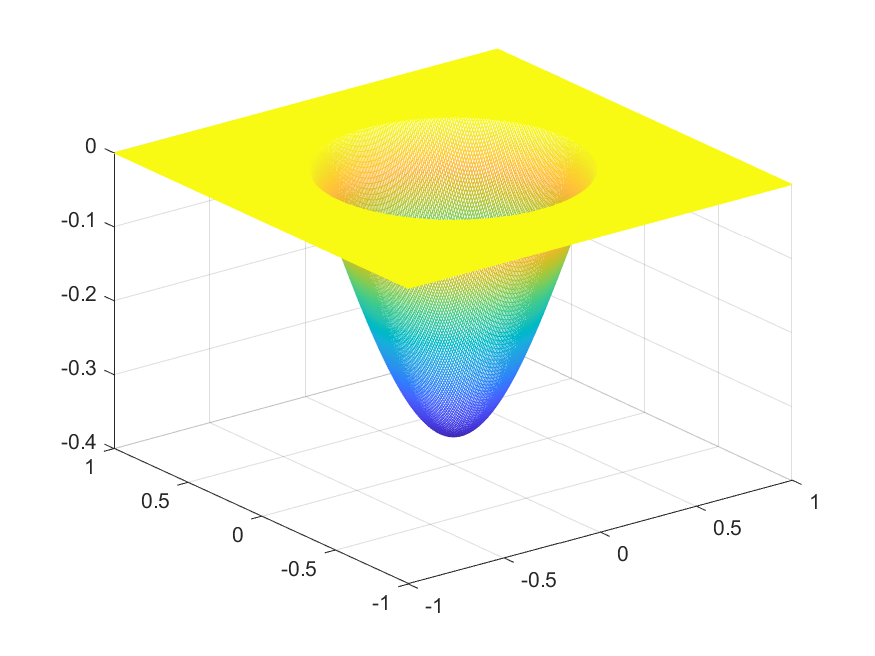}
	\end{minipage}
	\vspace{-10pt}
	\caption{The true solution $u$ (left) and the numerical solution $u_h$ (right) in \textbf{Example 1} with $\beta_1=10000,\,\beta_2=1$. \label{e1:u1}
	} 
\end{figure}  

\begin{figure}[H]
	\vspace{-10pt}
	\begin{minipage}{0.45\linewidth}        	
		\centering
		\includegraphics[scale=0.4]{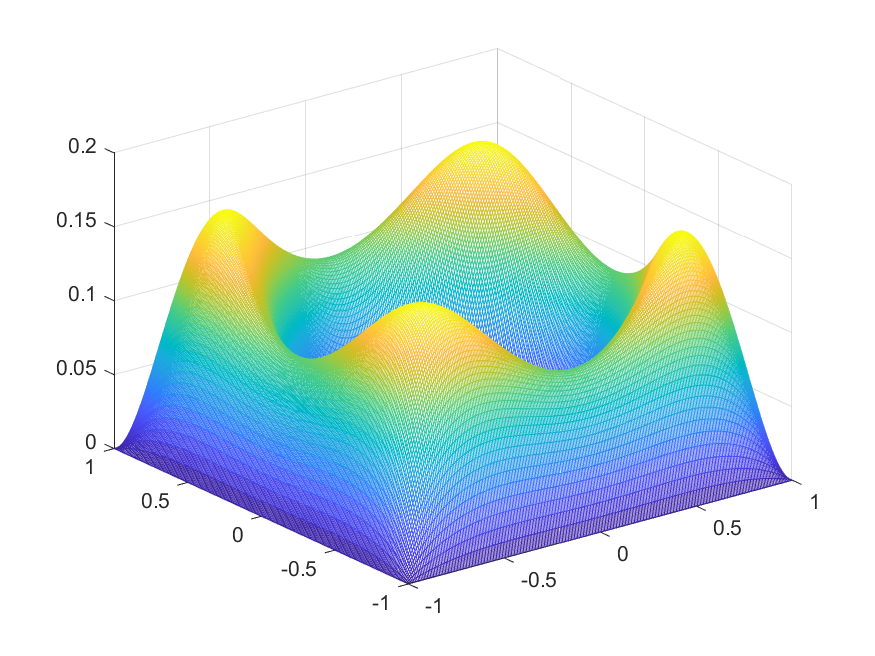}      	
	\end{minipage}
	\begin{minipage}{0.45\linewidth}
		\centering
		\includegraphics[scale=0.4]{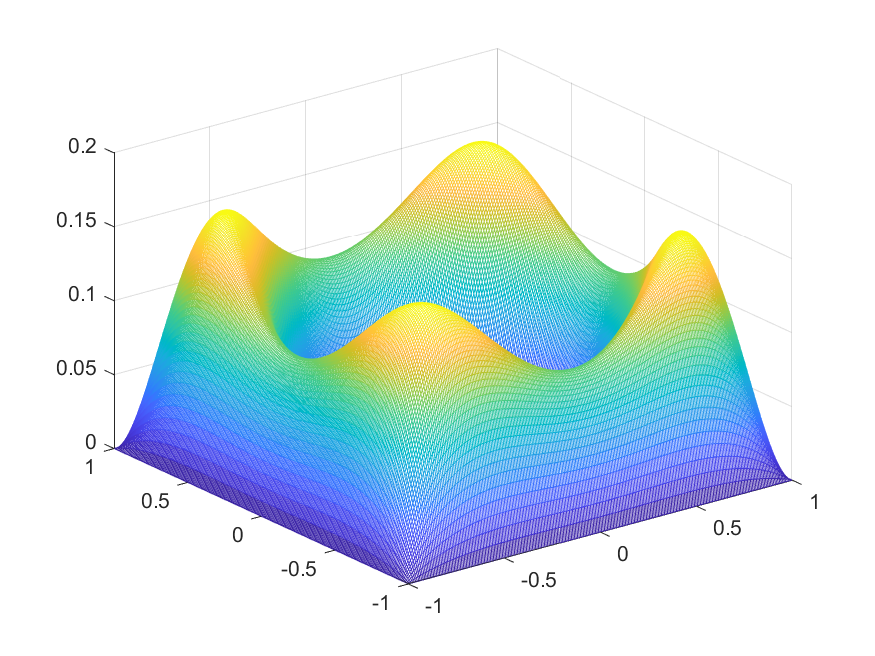}
	\end{minipage}
	\vspace{-10pt}
	\caption{The true solution $u$ (left) and the numerical solution $u_h$ (right) in \textbf{Example 1} with $\beta_1=1,\,\beta_2=10000$. \label{e1:u2}
	}
\end{figure}

\begin{table}[H]
	\caption{Numerical results for \textbf{Example 1} with $\beta_1 =10000,\,\beta_2=1$. } 
	\centering
	\begin{tabular}{c c c c c}
		\toprule
		\text{$\frac{1}{h}$}&
		\text{$\|u-u_h\|_{L^2(\Omega)}$}&
		\text{order}&
		\text{$|u-u_h|_{H^1(\Omega)}$}&
		\text{order}\\
		\midrule
		16   & 6.5063e-4 &  & 4.1746e-2 &  \\
		32   & 1.8759e-4 & 1.7943 & 2.1210e-2 & 0.9769 \\
		64   & 4.4875e-5 & 2.0636 & 1.0670e-2 & 0.9912 \\
		128  & 1.1139e-5 & 2.0103 & 5.3511e-3 & 0.9956 \\
		256  & 2.8806e-6 & 1.9512 & 2.6839e-3 & 0.9955 \\
		\bottomrule
	\end{tabular}\label{e1:1}
\end{table}
\begin{table}[H]
	\caption{Numerical results for \textbf{Example 1} with $\beta_1 =100,\,\beta_2=1$. } 
	\centering
	\begin{tabular}{c c c c c}
		\toprule
		\text{$\frac{1}{h}$}&
		\text{$\|u-u_h\|_{L^2(\Omega)}$}&
		\text{order}&
		\text{$|u-u_h|_{H^1(\Omega)}$}&
		\text{order}\\
		\midrule
		16 & 6.4573e-4  &  & 4.1777e-2 &  \\
		32 &  1.8587e-4 & 1.7966 & 2.1221e-2 & 0.9772 \\
		64 &  4.4507e-5 & 2.0622 & 1.0677e-2 & 0.9910 \\
		128 & 1.1053e-5 & 2.0095 & 5.3549e-3 & 0.9956 \\
		256 & 2.8220e-6 & 1.9697 & 2.6834e-3 & 0.9968 \\	
		\bottomrule
	\end{tabular}\label{e1:2}
\end{table}
\begin{table}[H]
	\caption{Numerical results for \textbf{Example 1} with $\beta_1 =1,\,\beta_2=1$. } 
	\centering
	\begin{tabular}{c c c c c }
		\toprule
		\text{$\frac{1}{h}$}&
		\text{$\|u-u_h\|_{L^2(\Omega)}$}&
		\text{order}&
		\text{$|u-u_h|_{H^1(\Omega)}$}&
		\text{order}\\
		\midrule
		16 &  3.0198e-3 &  & 1.6998e-1 &  \\
		32 &  7.6008e-4 & 1.9902 & 8.5373e-2 & 0.9935 \\
		64 &  1.9056e-4 & 1.9959 & 4.2742e-2 & 0.9981 \\
		128 &  4.7698e-5 & 1.9983 & 2.1383e-2 & 0.9991 \\
		256 &  1.1931e-5 & 1.9992 & 1.0695e-2 & 0.9996 \\
		\bottomrule
	\end{tabular}\label{e1:3}
\end{table}
\begin{table}[H]
	\caption{Numerical results for \textbf{Example 1} with $\beta_1 =1,\,\beta_2=100$. } 
	\centering
	\begin{tabular}{c c c c c }
		\toprule
		\text{$\frac{1}{h}$}&
		\text{$\|u-u_h\|_{L^2(\Omega)}$}&
		\text{order}&
		\text{$|u-u_h|_{H^1(\Omega)}$}&
		\text{order}\\
		\midrule
		16 & 3.0692e-3 &  & 1.6473e-1 &  \\
		32 &  7.8090e-4 & 1.9746 & 8.2682e-2 & 0.9944 \\
		64 &  1.9477e-4 & 2.0034 & 4.1385e-2 & 0.9984 \\
		128 &  4.8704e-5 & 1.9996 & 2.0702e-2 & 0.9993 \\
		256 &  1.2200e-5 & 1.9972 & 1.0353e-2 & 0.9998 \\
		\bottomrule
	\end{tabular}\label{e1:4}
\end{table}
\begin{table}[H]
	\caption{Numerical results for \textbf{Example 1} with $\beta_1 =1,\,\beta_2=10000$. } 
	\centering
	\begin{tabular}{c c c c c }
		\toprule
		\text{$\frac{1}{h}$}&
		\text{$\|u-u_h\|_{L^2(\Omega)}$}&
		\text{order}&
		\text{$|u-u_h|_{H^1(\Omega)}$}&
		\text{order}\\
		\midrule
		16 & 3.0705e-3 &  & 1.6473e-1 &  \\
		32 &  7.8130e-4 & 1.9745 & 8.2683e-2 & 0.9944 \\
		64 &  1.9486e-4 & 2.0034 & 4.1385e-2 & 0.9985 \\
		128 &  4.8728e-5 & 1.9996 & 2.0702e-2 & 0.9993 \\
		256 &  1.2150e-5 & 2.0037 & 1.0355e-2 & 0.9995 \\
		\bottomrule
	\end{tabular}\label{e1:5}
\end{table}

\subsection{Example 2: Flower-shaped Interface}
%
Example 1 illustrates the method’s performance for smooth, symmetric circular interfaces, but practical problems often involve irregular geometries with varying curvatures—features that heighten numerical challenges. To test geometric robustness, Example 2 uses a flower-shaped interface: asymmetric, non-convex, and defined by alternating curvatures, serving as a stricter benchmark for adaptability. The computational domain remains \(\Omega = (-1,1) \times (-1,1)\). The interface is described by the level set function:  
\begin{equation*}
 	\varPhi_{\Gamma}(x,y) = \sqrt{x^2 + y^2} - \frac{1}{2} + 2^{\sin\left(5\arctan\frac{y}{x}\right) - 3},
\end{equation*}  
where the interface geometry is visualized in Figure \ref{fig:flowerinterface}.  
The exact solution is chosen as:  
\begin{equation*}
 	u = \frac{1}{\beta}\varPhi_{\Gamma}(x,y)\sin(\pi x)\sin(\pi y),
\end{equation*}  
consistent in form with Example 1 to isolate geometric effects—here, the \(\sin(\pi x)\sin(\pi y)\) term introduces additional spatial variation, testing the method’s handling of combined geometric and solution complexity. Numerical results validating convergence and robustness are provided in Tables \ref{e2:1}–\ref{e2:5}.
 \begin{figure}[H]     	
	\centering
	\includegraphics[scale=0.5]{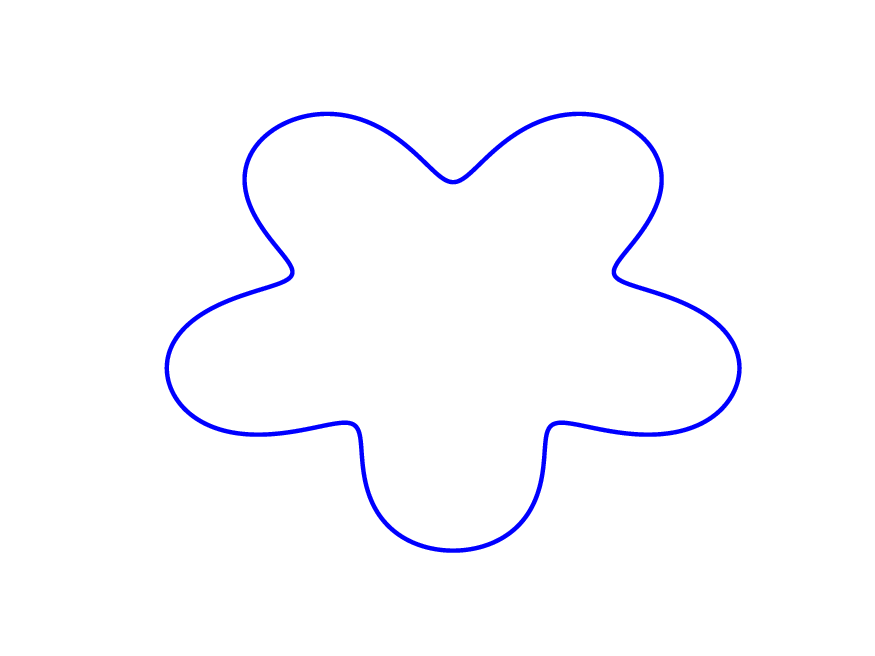}
	\caption{Flower-shaped interface defined by \(\varPhi_{\Gamma}(x,y) = 0\).}
	\label{fig:flowerinterface}
\end{figure}  
\begin{figure}[H]
	\vspace{-10pt}
	\begin{minipage}{0.5\linewidth}        	
		\centering
		\includegraphics[scale=0.5]{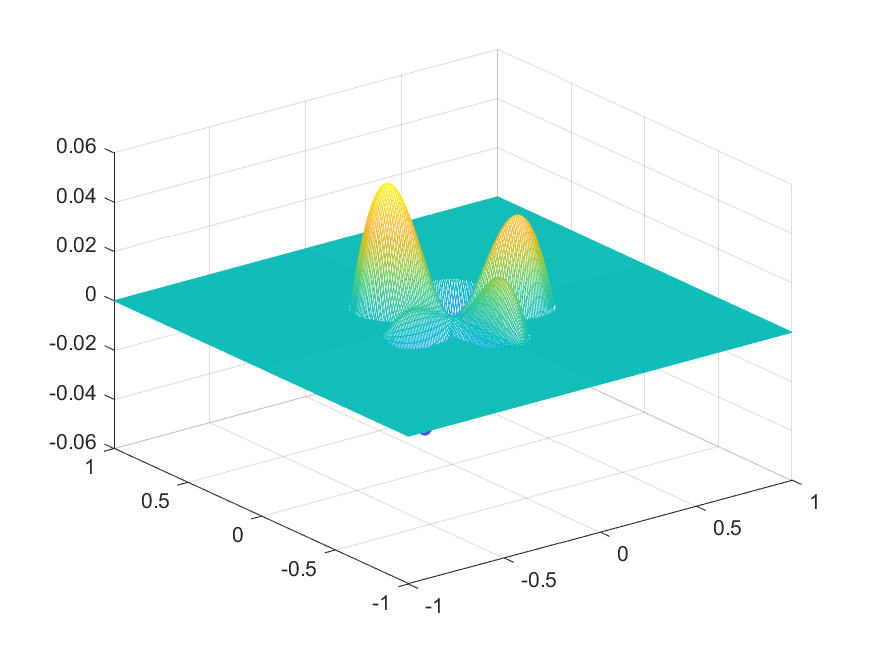}      	
	\end{minipage}
	\begin{minipage}{0.5\linewidth}
		\centering
		\includegraphics[scale=0.5]{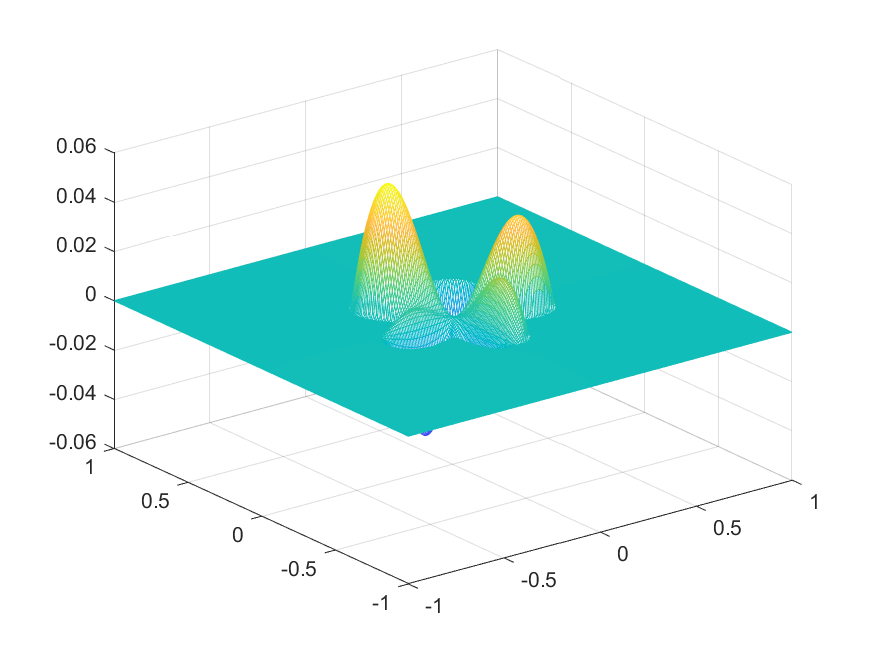}
	\end{minipage}
	\vspace{-10pt}
	\caption{The true solution $u$ (left) and the numerical solution $u_h$ (right) in \textbf{Example 2} with $\beta_1=10000,\,\beta_2=1$. 
	}
\end{figure}
\begin{figure}[H]
	\vspace{-10pt}
	\begin{minipage}{0.5\linewidth}        	
		\centering
		\includegraphics[scale=0.5]{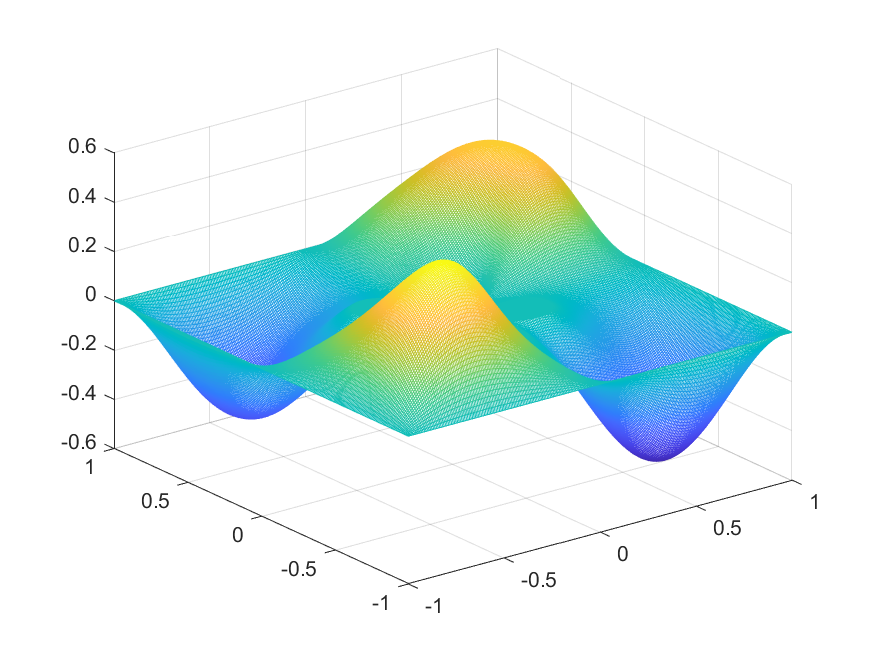}      	
	\end{minipage}
	\begin{minipage}{0.5\linewidth}
		\centering
		\includegraphics[scale=0.5]{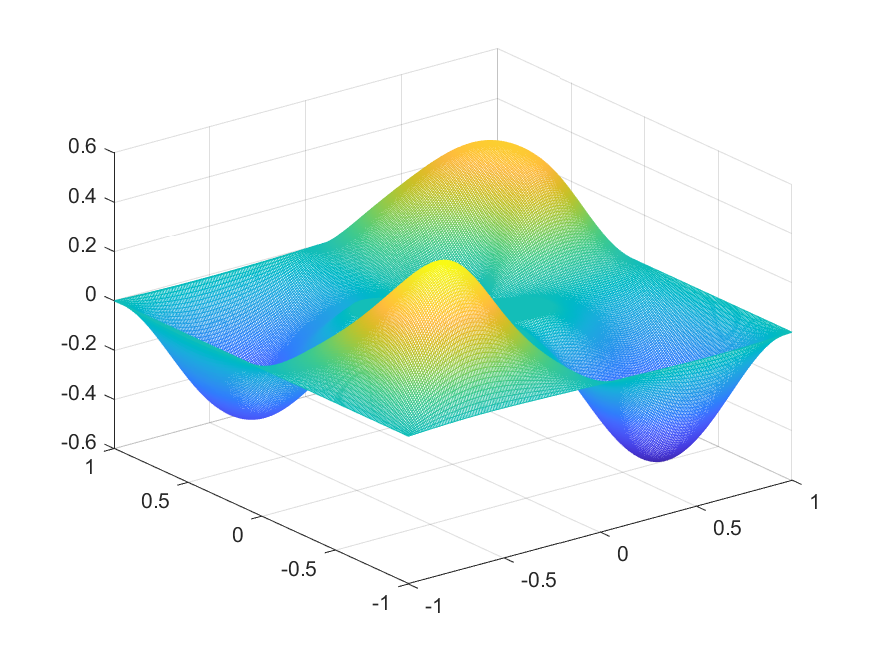}
	\end{minipage}
	\vspace{-10pt}
	\caption{The true solution $u$ (left) and the numerical solution $u_h$ (right) in \textbf{Example 2} with $\beta_1=1,\,\beta_2=10000$. 
	} 
\end{figure}

\begin{table}[H]
	\caption{Numerical results for \textbf{Example 2} with $\beta_1 =10000,\,\beta_2=1$. } 
	\centering
	\begin{tabular}{c c c c c }
		\toprule
		\text{$\frac{1}{h}$}&
		\text{$\|u-u_h\|_{L^2(\Omega)}$}&
		\text{order}&
		\text{$|u-u_h|_{H^1(\Omega)}$}&
		\text{order}\\
		\midrule
		16 & 1.2702e-3 &  & 8.6556e-2 &  \\
		32 & 3.4194e-4 & 1.8933 & 4.5593e-2 & 0.9248 \\
		64  & 9.0480e-5 & 1.9181 & 2.3468e-2 & 0.9581 \\
		128 & 2.3155e-5 & 1.9663 & 1.1884e-2 & 0.9817 \\
		256 & 5.9911e-6 & 1.9504 & 5.9752e-3 & 0.9919 \\
		\bottomrule
	\end{tabular}\label{e2:1}
\end{table}
\begin{table}[H]
	\caption{Numerical results for \textbf{Example 2} with $\beta_1 =100,\,\beta_2=1$. } 
	\centering
	\begin{tabular}{c c c c c }
		\toprule
		\text{$\frac{1}{h}$}&
		\text{$\|u-u_h\|_{L^2(\Omega)}$}&
		\text{order}&
		\text{$|u-u_h|_{H^1(\Omega)}$}&
		\text{order}\\
		\midrule
		16 & 1.2628e-3 &  & 8.6542e-2 &  \\
		32 & 3.4036e-4 & 1.8915 & 4.5583e-2 & 0.9249 \\
		64 & 9.0027e-5 & 1.9186 & 2.3470e-2 & 0.9577 \\
		128 & 2.3101e-5 & 1.9624 & 1.1885e-2 & 0.9817 \\
		256 & 5.8597e-6 & 1.9791 & 5.9742e-3 & 0.9924 \\
		\bottomrule
	\end{tabular}\label{e2:2}
\end{table}
\begin{table}[H]
	\caption{Numerical results for \textbf{Example 2} with $\beta_1 =1,\,\beta_2=1$. } 
	\centering
	\begin{tabular}{c c c c c }
		\toprule
		\text{$\frac{1}{h}$}&
		\text{$\|u-u_h\|_{L^2(\Omega)}$}&
		\text{order}&
		\text{$|u-u_h|_{H^1(\Omega)}$}&
		\text{order}\\
		\midrule
		16 & 3.5408e-3 &  & 2.5773e-1 &  \\
		32 & 9.0846e-4 & 1.9626 & 1.3056e-1 & 0.9812 \\
		64 & 2.3046e-4 & 1.9789 & 6.5703e-2 & 0.9906 \\
		128 & 5.7997e-5 & 1.9905 & 3.2932e-2 & 0.9965 \\
		256 & 1.4545e-5 & 1.9955 & 1.6486e-2 & 0.9983 \\
		\bottomrule
	\end{tabular}\label{e2:3}
\end{table}
\begin{table}[H]
	\caption{Numerical results for \textbf{Example 2} with $\beta_1 =1,\,\beta_2=100$. } 
	\centering
	\begin{tabular}{c c c c c }
		\toprule
		\text{$\frac{1}{h}$}&
		\text{$\|u-u_h\|_{L^2(\Omega)}$}&
		\text{order}&
		\text{$|u-u_h|_{H^1(\Omega)}$}&
		\text{order}\\
		\midrule
		16 &3.4256e-3 &  & 2.4245e-1 &  \\
		32 & 8.7868e-4 & 1.9629 & 1.2230e-1 & 0.9872 \\
		64 & 2.2201e-4 & 1.9847 & 6.1368e-2 & 0.9949 \\
		128 & 5.6070e-5 & 1.9853 & 3.0715e-2 & 0.9986 \\
		256 & 1.4028e-5 & 1.9990 & 1.5366e-2 & 0.9991 \\
		\bottomrule
	\end{tabular}\label{e2:4}
\end{table}
\begin{table}[H]
	\caption{Numerical results for \textbf{Example 2} with $\beta_1 =1,\,\beta_2=10000$. } 
	\centering
	\begin{tabular}{c c c c c }
		\toprule
		\text{$\frac{1}{h}$}&
		\text{$\|u-u_h\|_{L^2(\Omega)}$}&
		\text{order}&
		\text{$|u-u_h|_{H^1(\Omega)}$}&
		\text{order}\\
		\midrule
		16 & 3.4329e-3 &  & 2.4247e-1 &  \\
		32 & 8.8108e-4 & 1.9621 & 1.2231e-1 & 0.9872 \\
		64 &  2.2361e-4 & 1.9783 & 6.1376e-2 & 0.9948 \\
		128 &  5.6284e-5 & 1.9902 & 3.0715e-2 & 0.9987 \\
		256 &  1.4282e-5 & 1.9785 & 1.5369e-2 & 0.9989 \\
		\bottomrule
	\end{tabular}\label{e2:5}
\end{table}
\subsection{Example 3: Test on Interface Position Variation}
Examples 1 and 2 focus on fixed interface geometries to assess approximation capability and geometric robustness. Example 3 instead evaluates robustness to positional variations, examining how the method performs as the interface shifts—altering element cuts (e.g., small cuts or misalignments with the mesh)—with a fixed mesh. The computational domain is $\Omega$ is $(-1,1)\times(-1,1).$ The interface is a circle centered at the $(-t,0)$ with radius $r=0.6,$ i.e.,
\begin{equation*}
	\varPhi_{\Gamma}(x,y) = (x+t)^2+y^2-(0.6)^2.
\end{equation*}
The exact solution is chosen as follows
\begin{equation*}
	u = \frac{1}{\beta}\varPhi_{\Gamma}(x,y)(x^2-1)(y^2-1).
\end{equation*}
We fixed the mesh size with $h=1/20,$ and $-2h\le t\le 2h,$  in increments of $h^2.$ Numerical results are shown in Figures \ref{fig:movinginterface1}–\ref{fig:movinginterface2}.
\begin{figure}[H]
	\vspace{-10pt}
	\centering
	\includegraphics[scale=0.6]{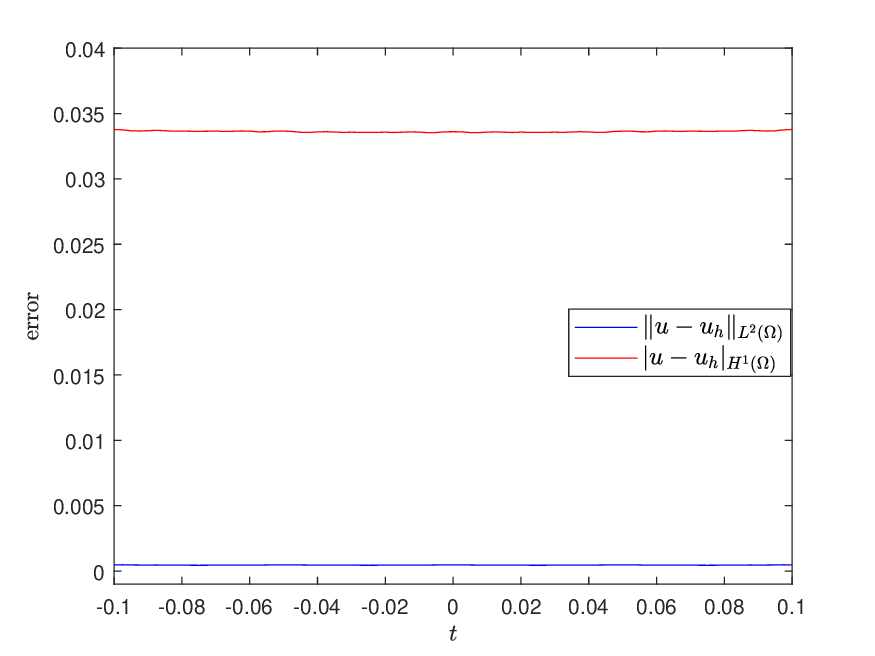} 
	\vspace{-10pt}
	\caption{Finite element errors vary with the position of the interface with $\beta_1=10000,\,\beta_2=1$.}\label{fig:movinginterface1}
\end{figure}
\begin{figure}[H]
	\vspace{-10pt}
	\centering
	\includegraphics[scale=0.6]{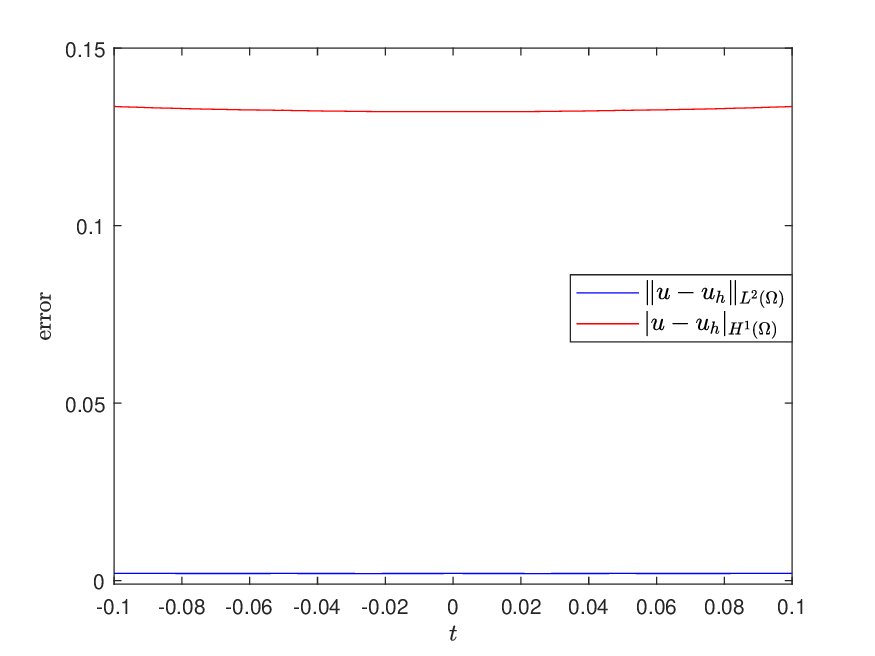} 
	\vspace{-10pt}
	\caption{Finite element errors vary with the position of the interface with $\beta_1=1,\,\beta_2=10000$.}\label{fig:movinginterface2}
\end{figure}
\section{Conclusion}

In this work, we proposed a new nonconforming \(P_1\) finite element method for solving elliptic interface problems. The method is constructed on a locally anisotropic mixed mesh, which was first introduced in our earlier work~\cite{Hu2021optimal}. The present results further demonstrate the effectiveness of this type of mesh in accurately resolving interface geometry while maintaining computational simplicity.

An interesting observation is that the proposed nonconforming element reduces to the standard Crouzeix–Raviart element on triangular elements and to the Park–Sheen element on quadrilateral elements \cite{Park2003P1}. This structure naturally accommodates the use of hybrid meshes and may be beneficial in other applications where elements of different shapes need to be coupled effectively.

We also established interpolation error estimates on quadrilateral elements satisfying the RDP condition, which play an important role in the convergence analysis. More importantly, we developed a novel consistency error estimate for nonconforming elements that removes the quasi-regularity assumption commonly required in existing methods. 

\section*{Declarations}
\textbf{Conflict of interest} The authors declare that they have no conflict of interest.
	\bibliographystyle{plain}
	\bibliography{refer}
\end{document}